\newcommand{\St}{\mathop{\rm St}\nolimits}
\newcommand{\Stg}{\mathop{\rm St}\nolimits_G}
\newtheorem{theorem}{Theorem}[section]
\newtheorem{corollary}[theorem]{Corollary}
\newtheorem{proposition}[theorem]{Proposition}
\newtheorem{lemma}[theorem]{Lemma}
\newtheorem{question}[theorem]{Question}
\newtheorem*{theoremmain}{Theorem~\ref{thm:main}}
\theoremstyle{definition}
\newtheorem{definition}[theorem]{Definition}
\newtheorem{remark}[theorem]{Remark}
\newtheorem{notation}[theorem]{Notation}
\title{Explicit Generators for the Stabilizers of Rational Points in Thompson’s Group $F$}
\author[1]{Krystofer Baker}
\author[1]{Dmytro Savchuk}
\affil[1]{Department of Mathematics and Statistics\\
        University of South Florida\\
        4202 E Fowler Ave\\
        Tampa, FL 33620-5700\\
        \href{mailto:kbaker9@mail.usf.edu}{kbaker9@mail.usf.edu}, \href{mailto:savchuk@usf.edu}{savchuk@usf.edu}}
\begin{document}

\maketitle

\begin{abstract}
We construct explicit finite generating sets for the stabilizers in Thompson's group $F$ of rational points of a unit interval or the Cantor set. Our technique is based on the Reidemeister-Schreier procedure in the context of Schreier graphs of such stabilizers in $F$. It is well known that the stabilizers of dyadic rational points are isomorphic to $F\times F$ and can thus be generated by 4 explicit elements. We show that the stabilizer of every non-dyadic rational point $b\in (0,1)$ is generated by 5 elements that are explicitly calculated as words in generators $x_0, x_1$ of $F$ that depend on the binary expansion of $b$. We also provide an alternative simple proof that the stabilizers of all rational points are finitely presented.
\end{abstract}

\section{Introduction}
Research around Thompson's group $F$ has been very active since its discovery in 1965 by Richard Thompson, who first defined the group in connection with his work in logic. The group showed up in many areas since then. It led to the construction of finitely-presented groups with an unsolvable word problem~\cite{mckenzie_t:unsolvable}, served as a proposed platform group for some cryptographic schemes~\cite{shpilrain_u:thompson_crypto,matucci:crypto08}, and played an important role in homotopy theory~\cite{brown_g:FP_infty84}, combinatorics~\cite{cleary_t:combinatorial_prop_of_F04}, and, of course, group theory. Many basic (and not only) facts about $F$ are given in a classical paper by Cannon, Floyd, and Parry~\cite{cannon_fp:intro_thompson}. Belk's PhD thesis~\cite{belk:phd} can also serve as a very nicely written introduction to the area.

The group $F$ acts by orientation preserving piecewise linear homeomorphisms on the unit interval $[0,1]$ and on the Cantor set $X^\omega$ consisting of all infinite words over the alphabet $X=\{0,1\}$. These actions give rise to the stabilizer subgroups of $F$ with respect to points or sets of points in $[0,1]$ or $X^\omega$. Understanding such subgroups may give extra information about $F$. For example, one can negatively solve one of the main open questions about $F$, whether or not it is amenable (see a recent survey on this topic by Guba~\cite{guba:thompson_group_amenability_survey22}), if one constructs an infinite index subgroup $H<F$ such that the Schreier graph $\Gamma(F,H)$ of $H$ in $F$ is non-amenable. In~\cite{savchuk:thompson,savchuk:thompson2} the second author has constructed the Schreier graphs of the stabilizers of each point in $X^\omega$ and showed that the Schreier graph of the stabilizer of any finite subset of $X^\omega$ is amenable, thus confirming that these subgroups do not help to resolve the amenability question, but also offering a new geometric insight on the action of $F$ on $X^\omega$.  This construction was later used
to study random walks on the group $F$.  Kaimanovich in~\cite{kaimanovich:thompson_not_liouville} showed that $F$ does not have the Liouville property for finitely supported measures, and Juschenko and Zheng~\cite{juschenko_z:infinitely_supported18} proved that it has the Liouville property for some measures that are not finitely supported.

Schreier graphs appear naturally in many contexts. For example, papers~\cite{dangeli_dmn:schreier} and~\cite{bond_cdn:amenable} describe Schreier graphs of the actions of certain groups generated by automata on the boundary of a rooted tree. Miasnikov and the second author proved in~\cite{miasnikov_s:automatic_graph} that one of the Schreier graphs studied in~\cite{bond_cdn:amenable} is the first example of an automatic graph of intermediate growth. Schreier graphs also play an important role in the spectral computations related to groups acting on rooted trees (see, for example,~\cite{bartholdi_g:spectrum,grigorch_ss:img,grigorchuk-n:schur} and a more recent paper~\cite{grigorch_ln:subshifts_with_leading_sequences__schreier_graphs22}).

Stabilizers of points in the Cantor set also serve as the simplest examples of maximal subgroups in $F$ of infinite index, as was shown in~\cite{savchuk:thompson2}. The question on the existence of other maximal subgroups of infinite index in $F$ posed in~\cite{savchuk:thompson2} was answered by Golan and Sapir in~\cite{golan_s:on_subgroups_of_thompson_group_F17}. They used the so-called oriented subgroup of $F$ defined by Jones in relation to his study of linear representation of $F$ and $T$~\cite{jones:some_unitary_reps_of_F_and_T17}. This group was also studied by Golan and Sapir in~\cite{golan_s:jones_subgroup_of_F17}. Subsequently, other subgroups defined in Jones' program were shown to provide examples of maximal subgroups of infinite index in $F$ and other Thompson-type groups in the papers by Aiello and Nagnibeda~\cite{aiello_n:on_oriented_thompson_subgroup_F_3_22,aiello_n:3colorable_subgroup_max_sungroups_of_F23} and Golan~\cite{golan:on_maximal_subgroups_of_F}, who constructed an infinite countable family of non-isomorphic maximal subgroups of infinite index in $F$. A different source of such subgroups was discovered in Thompson's group $V$ by Belk, Bleak, Quick, and Skipper in~\cite{belk_bqs:type_systems_and_maximal_subgroups_of_V}.

The structure of the stabilizer subgroups in $F$ of finite subsets of $(0,1)$ was studied by Golan and Sapir in~\cite{golan_s:stabilizers_of_finite_sets17}, where it was shown, in particular, that the stabilizers of finite sets of rational numbers from $(0,1)$ are finitely generated. Namely, Theorem~5.9 in~\cite{golan_s:stabilizers_of_finite_sets17} claims that the stabilizer of a set consisting of $m_1$ dyadic rational numbers and $m_2$ non-dyadic rational numbers can be generated by $2m_1+m_2+2$ elements, and that this is the minimal possible size for a generating set of such subgroup. Therefore, for singleton sets consisting of rational numbers one gets 4-element generating sets for dyadic rational numbers and 3-element generating sets for non-dyadic rational numbers. However, the general construction of generating sets of the stabilizer subgroups in~\cite{golan_s:stabilizers_of_finite_sets17} is quite technical. According to Golan (private communication) in the case of a singleton set, the construction can yield a generating set in which the generators are given in the form of tree pair diagrams. The main purpose of this paper is to construct an explicit and simple generating sets for the stabilizers $\St_F(b)$ in $F$ for all rational points $b\in X^\omega$ in which the generators are given as explicit words in the generators $x_0,x_1$ of $F$ that are obtained directly from the period and preperiod of $b$. We achieve this via a completely different approach compared to~\cite{golan_s:stabilizers_of_finite_sets17}: we apply the Reidemeister-Schreier procedure to the Schreier graphs $\Gamma_b$ of the stabilizers of rational points in the Cantor set $X^\omega$, constructed in~\cite{savchuk:thompson2}. Note that since the Schreier graphs $\Gamma_b$ are infinite, the standard Reidemeister-Schreier procedure yields infinite generating sets that need to be simplified to 4- or 5-element sets. While our construction yields much simpler generating sets written explicitly as words in the generators $x_0,x_1$ of $F$, the trade-off of this approach is that it currently works only for stabilizers of singleton sets, as there is currently no good description of Schreier graphs of the stabilizers of larger finite sets of rational numbers. Our method also allows us to prove (see Proposition~\ref{prop:fp}) that the stabilizer subgroups are finitely presented, which also follows from the description of their algebraic structure from~\cite[Lemma~4.11]{golan_s:stabilizers_of_finite_sets17}.

Our main theorem is given below. To state it, we recall that the generating set $\{x_0,x_1\}$ of $F$ can be extended to an infinite generating set $\{x_0,x_1,x_2,\ldots\}$ with respect to which $F$ has the following infinite presentation
\begin{equation}
\label{eqn:infinite_presentation}
F\cong\langle x_0, x_1, x_2,\ldots\mid x_kx_nx_k^{-1}=x_{n+1} \text{ for }  0\leq k < n \rangle,
\end{equation}
so that $x_n=x_0^{n-1}x_{1}x_0^{-n-1}$ for $n\geq 2$. We also define elements $y_1=x_0^{-2}x_1x_0$ and $y_2=x_0^{-3}x_1x_0^2$ of $F$, whose graphs are obtained from the graphs of $x_1$ and $x_2$ by applying central symmetry about the center of $[0,1]^2$. In the statement below a word $w$ over any 2-letter alphabet $\{x,y\}$ will be denoted by $w(x,y)$ and, for any $g_0,g_1\in F$, $w(g_1,g_2)$ denotes the element of $F$ obtained by replacing each $x$ and $y$ in $w$ by $g_0$ and $g_1$, respectively.

\begin{theorem}
\label{thm:main}~\\[-2mm]
\begin{enumerate}
  \item[(a)] For a rational point $a=10w^\infty$ of the Cantor set $X^\omega$ with a finite word $w$ that is not a proper power, we have
  \[\St_F(a)=\langle x_2, x_3, y_1, y_2, w(x_1^{-1},x_1^{-1}x_0)\rangle.\]
  \item[(b)] Let $b\in X^\omega$ be an arbitrary rational point different from $0^\infty$ and $1^\infty$ and let $b=vw^\infty$ be its unique decomposition as an eventually periodic word with a finite nonempty period $w\in X^*$ that is not a proper power and a finite preperiod $v\in X^*$ such that the ending of $v$ differs from the one of $w$. Then there is $h\in F$ that can be explicitly computed from the Schreier graph $\Gamma_b$, such that $h(b)=10vw^\infty$ and
  \[\St_F(b)=\langle x_2^h, x_3^h, y_1^h, y_2^h, w(x_1^{-1},x_1^{-1}x_0)^h\rangle.\]
\end{enumerate}
\end{theorem}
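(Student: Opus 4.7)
The overall strategy is to apply the classical Reidemeister--Schreier procedure, together with Tietze transformations, to the Schreier graphs $\Gamma_a$ and $\Gamma_b$ whose combinatorial description is available from~\cite{savchuk:thompson2}. Since these graphs are infinite, the raw Reidemeister--Schreier output is an infinite generating set for $\St_F(a)$, so the core of the proof is to show that all but five of these generators are redundant modulo the relations of $F$.

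For part~(a), let $a = 10w^\infty$. I would first fix a spanning tree $T$ of $\Gamma_a$ rooted at $a$, chosen naturally so that for each vertex $u$ the unique tree-path word $t_u \in \{x_0^{\pm 1}, x_1^{\pm 1}\}^*$ from $a$ to $u$ can be read off the binary expansion of $u$. Reidemeister--Schreier then produces, for each non-tree edge $u \xrightarrow{x_i} x_i u$ of $\Gamma_a$, a generator $t_u\, x_i\, t_{x_i u}^{-1}$ of $\St_F(a)$. I would then exhibit five distinguished non-tree edges whose labels yield exactly the five claimed generators: four ``short'' edges in the preperiodic portion of $\Gamma_a$ contribute $x_2, x_3, y_1, y_2$, while one edge closing up a cycle in the periodic portion contributes $w(x_1^{-1}, x_1^{-1}x_0)$. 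Checking that these five elements indeed lie in $\St_F(a)$ should then be routine: $x_2, x_3, y_1, y_2$ fix every point of $X^\omega$ whose binary expansion starts with $10$ (a direct piecewise-linear calculation), while the last generator traces a closed loop at $a$ in $\Gamma_a$.

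The heart of the argument is to show that the remaining non-tree edges contribute redundant generators, which I expect to follow by sliding each candidate along the infinite branches of $\Gamma_a$ using the commutation relations $x_k x_n x_k^{-1} = x_{n+1}$ from~\eqref{eqn:infinite_presentation}, rewriting it first as a word in $x_2, x_3, y_1, y_2$ and then absorbing any leftover ``cyclic'' component into $w(x_1^{-1}, x_1^{-1}x_0)$. For part~(b), the graph $\Gamma_b$ has essentially the same shape as $\Gamma_a$ save for an extra preperiodic stem of length $|v|$. I would locate the vertex of $\Gamma_b$ labeled by $10vw^\infty$ (which lies in the $F$-orbit of $b$) and take $h$ to be the label of the unique reduced tree-path from $b$ to that vertex, yielding $h(b) = 10vw^\infty$ explicitly in $x_0, x_1$. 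The analysis of part~(a) should adapt verbatim to $\St_F(10vw^\infty)$ since the extra preperiod $v$ only lengthens the stem without altering which edges are essential, and the identity $\St_F(b) = h^{-1}\St_F(h(b))h$ then delivers the claimed generating set after conjugation.

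The main obstacle throughout is the Tietze-reduction step: trimming the infinite Reidemeister--Schreier output down to exactly five elements requires delicate bookkeeping of how the generators transform along $\Gamma_a$ under the $x_0, x_1$-action, most notably around the periodic cycle where the $w$-dependence of the fifth generator materializes. A secondary difficulty will be giving a clean recipe for the conjugator $h$ in part~(b) from the combinatorics of $\Gamma_b$, which should nevertheless be algorithmic once the spanning tree and its labeling are under control.
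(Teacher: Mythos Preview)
Your approach is essentially the paper's: apply Reidemeister--Schreier to $\Gamma_a$, then reduce the infinite Schreier generating set using the relations of $F$. Two points deserve sharpening.

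First, your picture of the non-tree edges is slightly off. It is not that four distinguished edges give $x_2,x_3,y_1,y_2$ directly while the rest are redundant; rather, the non-tree edges (apart from the single cycle-closing edge $e_w$) fall into two infinite families $x_{W,n}$ and $y_{W,n}$ indexed by words $W\in\{A,B\}^*$ and $n\ge 1$. The reduction step is a computation showing $x_{W,n}=x_{n+1+W_B}$ and $y_{W,n}=y_{n+W_A}$, so that the entire infinite family collapses to $\{x_i:i\ge 2\}\cup\{y_j:j\ge 1\}$, and only then do the relations of~\eqref{eqn:infinite_presentation} cut this down to $x_2,x_3$ together with $y_1,y_2$. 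The crucial ingredient you do not mention is that you also need the mirror relations $y_ky_ny_k^{-1}=y_{n+1}$ for the $y$-family; the paper obtains these by observing that the order-$2$ automorphism $\varphi(f)(t)=1-f(1-t)$ of $F$ sends $x_n$ to $y_n$, so the $y_n$'s inherit the same relations as the $x_n$'s. Without this, you cannot reduce $\{y_j:j\ge 1\}$ to $\{y_1,y_2\}$.

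Second, for part~(b) there is no need to redo the analysis at $10vw^\infty$. Since $F$ acts transitively on the orbit of $b$ (which contains $10w^\infty$), one simply takes any $h$ with $h(b)=10w^\infty$, reads it off as the label of a path in $\Gamma_b$, and applies $\St_F(b)=h\St_F(10w^\infty)h^{-1}$ together with part~(a). Your proposed adaptation would work but is unnecessary.
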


Our result can serve as additional motivation to study Schreier graphs. Our approach can be applied to other subgroups in other groups, particularly $F$-type, where the corresponding Schreier graphs are well understood. For example, the Schreier graphs of Thompson's group $T$ with respect to the stabilizers of rational points in the Cantor set were constructed by Pennington~\cite{pennington:thesis17} and one can use the approach developed in the present paper to construct their generating sets.  In general, it would be interesting to describe Schreier graphs of other maximal subgroups of $F$ discovered by Golan, Sapir, Aiello, and Nagnibeda, and of the stabilizers of points in other groups of Thompson's family. Another interesting question is about the Schreier graphs of the minimal non-solvable group of homeomorphisms sitting inside $F$ from~\cite{bleak:minimal_non-solvable_group_of_homeomorphisms09}. Philosophically, a Schreier graph of a smaller subgroup approximates the Cayley graph of the group better, but is usually more complicated and harder to describe and work with.

The structure of the paper is as follows. We start by setting up the notation and recalling basic facts about $F$ in Section~\ref{sec:groupF}. In Section~\ref{sec:schreier} we recall the construction of Schreier graphs for $F$ and introduce some more notation. Finally, Section~\ref{sec:generators} contains the proof of the main result.

\noindent \textbf{Acknowledgements.} The authors would like to thank Gili Golan for her comments on the historical background and to the anonymous referee for careful reading of the paper and providing several suggestions that improved the exposition.

\section{Thompson's group $F$}
\label{sec:groupF}

We start from introducing notation and recalling some basic properties $F$ that we will use later. Most basic properties of $F$ are nicely surveyed in~\cite{cannon_fp:intro_thompson}.

\begin{definition}
Thompson's group $F$ is the group of all strictly increasing piecewise linear homeomorphisms from the closed unit interval $[0,1]$ to itself that are differentiable everywhere except at finitely many dyadic rational numbers and such that on the intervals of differentiability the derivatives are integer powers of 2. The group operation is composition of homeomorphisms.
\end{definition}

The group $F$ is generated by two elements $x_0, x_1$ defined as follows.

\begin{equation}
\label{eqn:x_0x_1}
x_0(t)=\left\{
\begin{array}{ll}
\frac t2,&0\leq t\leq\frac12,\\[1mm]
t-\frac14,&\frac12\leq t\leq\frac34,\\[1mm]
2t-1,&\frac34\leq t\leq1,\\
\end{array} \right.
\qquad x_1(t)=\left\{
\begin{array}{ll}
t,&0\leq t\leq\frac12,\\[1mm]
\frac t2+\frac14,&\frac12\leq t\leq\frac34,\\[1mm]
t-\frac18,&\frac34\leq t\leq\frac78,\\[1mm]
2t-1,&\frac78\leq t\leq1.\\
\end{array} \right.
\end{equation}

The graphs of $x_0$ and $x_1$ are depicted in
Figure~\ref{fig_gens}.
\begin{figure}[h]
\begin{center}
\includegraphics{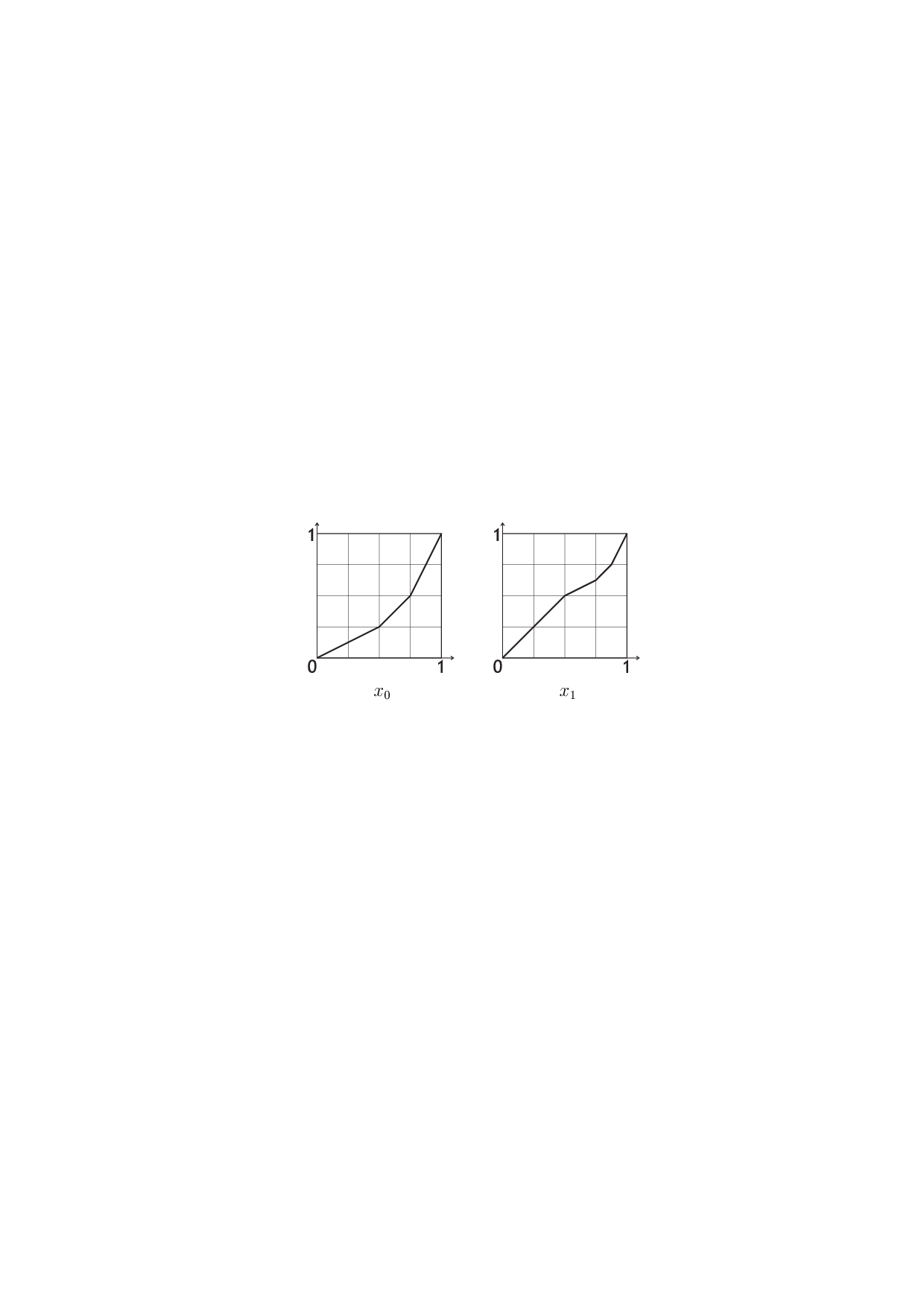}
\end{center}
\caption{Generators $x_0$ and $x_1$ of $F$\label{fig_gens}}
\end{figure}

Throughout the paper we will adopt the convention to use the right actions as was done in~\cite{savchuk:thompson2}.

\begin{notation}
For any two elements $f$, $g$ of $F$ and any $t\in[0,1]$
\begin{equation}
\label{eqn_conv}
(fg)(t)=g(f(t)),\quad f^g=gfg^{-1}, \quad [f,g]=fgf^{-1}g^{-1}.
\end{equation}
\end{notation}

To perform calculations with elements of $F$ we will use the rectangle diagrams of its elements as described in~\cite{cannon_fp:intro_thompson}. Given an element $f\in F$, its \emph{rectangle diagram} is a rectangle with a top that represents the domain of $f$, and a bottom that represents the range of $f$, in which for every point $t$ on the top where $f$ is not differentiable, we construct a line segment from $t$  to $f(t)$ on the bottom. These diagrams allow us to see how sub-intervals of the domain (in between the points of non-differentiability) are mapped to sub-intervals of the range. This is especially useful when composing functions because we can just stack the diagrams on top of each other.

Aside from being finitely generated, $F$ is known to have a balanced finite presentation with two generators and two relators:
\begin{equation}
\label{eqn:finite_presentation}
F=\langle x_0, x_1 \mid [x_1^{-1}x_0, x_0x_1x_0^{-1}]=[x_1^{-1}x_0, x_0^2x_1x_0^{-2}]=1 \rangle,
\end{equation}
where the generators $x_0,x_1$ correspond to the ones defined in~\eqref{eqn:x_0x_1}. Note that this presentation, as well as the presentation~\eqref{eqn:infinite_presentation} is obtained from the presentations given in~\cite{cannon_fp:intro_thompson} by reversing the involved words since we consider the right actions and~\cite{cannon_fp:intro_thompson} deals with left actions.

Besides the above finite presentation, in many situations it is convenient to consider an infinite, but more symmetric presentation for $F$ given in~\eqref{eqn:infinite_presentation}, in which generators $x_0$ and $x_1$ again correspond to the ones defined in~\eqref{eqn:x_0x_1}.  It immediately follows from~\eqref{eqn:infinite_presentation} that
\[x_n=x_0^{n-1}x_1x_0^{-(n-1)}.\]

Each point of the interval $[0,1]$ can be associated with its binary expansion. This allows us to extend naturally the action of $F$ on the sets $X^\ast$ and $X^\omega$ of all finite and infinite words over the alphabet $X=\{0,1\}$ respectively. The latter set is homeomorphic to a Cantor set and it is easy to see that $F$ acts on it by homeomorphisms.  The action of generators $x_0$ and $x_1$ can be defined as follows:

\begin{equation}
\label{eqn_cantor_action}
\begin{array}{ccc}
x_0:\ \left\{\begin{array}{lll}
0w&\mapsto& 00w,\\
10w&\mapsto& 01w,\\
11w&\mapsto& 1w,
\end{array}\right.&\hspace{1.5cm}&x_1:\ \left\{\begin{array}{lll}
0w&\mapsto& 0w,\\
10w&\mapsto& 100w,\\
110w&\mapsto& 101w,\\
111w&\mapsto& 11w,
\end{array}\right.\\
\end{array}
\end{equation}
where $w$ is an arbitrary word in $X^\omega$. These homeomorphisms are rational homeomorphisms of $X^\omega$ that can be defined by finite state asynchronous automata. The group of all rational homeomorphisms of $X^\omega$ was introduced in~\cite{gns00:automata} and was studied extensively by Belk, Bleak, Matucci, and Zaremsky in connection to Gromov hyperbolic groups~\cite{belk_bm:rational_embeddings_of_hyperbolic_groups21} and the Boone-Higman conjecture for this class~\cite{belk_bmz:hyperbolic_boone-higman}.

Note that the dyadic rational numbers in $[0,1]$ correspond to the sequences ending in $0^\infty$ or $1^\infty$, and rational points in $[0,1]$ correspond to eventually periodic sequences $vw^{\infty}$ in $X^\omega$. We will call such sequences \emph{rational} elements of $X^\omega$. All other elements we will call \emph{irrational}. There is a one-to-one correspondence between irrational elements of $X^\omega$ and irrational numbers in $[0,1]$.

In the description of Schreier graphs below we will refer to the one-sided shift on $X^\omega$, denoted by $\sigma$ and defined by
\[\sigma(a_1a_2a_3\ldots)=a_2a_3a_4\ldots\]
for $a_1a_2a_3\ldots\in X^\omega$.

\section{Schreier graphs of stabilizers of rational points}
\label{sec:schreier}

In order to construct explicit generating sets for the stabilizer subgroups in $F$, we will apply the Reidemeister-Schreier procedure to the Schreier graphs of these stabilizers. To explain the construction explicitly, we start from recalling the definition of Schreier graphs and the description of the Schreier graphs of the stabilizers of rational points of $X^\omega$ obtained in~\cite{savchuk:thompson2}.

\begin{definition}
Let $G$ be a group generated by a finite generating set $S$ acting on a set $M$. The \emph{(orbital) Schreier graph} $\Gamma(G,S,M)$ of the action of $G$ on $M$ with respect to the generating set $S$ is an oriented labeled graph defined as follows. The set of vertices of $\Gamma(G,S,M)$ is $M$ and there is an arrow from $a\in M$ to $b\in M$ labeled by $s\in S$ if and only if $a^s=b$. A Schreier graph with a selected vertex is called a \emph{pointed Schreier graph} and the selected vertex is then called the \emph{base point} of this graph.
\end{definition}

An equivalent alternative view on Schreier graphs goes back to Schreier. For any subgroup $H$ of a group $G$, the group $G$ acts on the set of right cosets $G/H$ by right multiplication. The corresponding Schreier graph $\Gamma(G,S,G/H)$ is denoted as $\Gamma(G,S,H)$ or just $\Gamma(G,H)$ if the generating set is clear from the context. The graph $\Gamma(G,S,H)$ is often considered as a pointed graph by selecting the vertex $H$ as a base point.

Conversely, if $G$ acts on $M$ transitively, then $\Gamma(G,S,M)$ is canonically isomorphic to $\Gamma(G,S,\Stg(a))$ for any $a\in M$, where the vertex $b\in M$ in $\Gamma(G,S,M)$ corresponds to the coset from $G/\Stg(a)$ consisting of all elements of $G$ that move $a$ to $b$.  Also, to simplify notation, we will call $\Gamma(G,S,\Stg(a))$ simply the Schreier graph of $a$ and denote by $\Gamma_a$ when the group, the set and the action are clear from context.

The Schreier graphs of the stabilizers of points of the Cantor set $X^\omega$ with respect to the generating set $\{x_0,x_1\}$ are constructed in Theorem~6.1 of~\cite{savchuk:thompson2}. First, the Schreier graph $\Gamma_{10^\infty}$ of the stabilizers of $10^\infty$  (corresponding to $\frac 12\in [0,1]$) shown in Figure~\ref{fig:graph12} was described in~\cite{savchuk:thompson}. In Figure~\ref{fig:graph12} the vertices are labeled by the nonzero prefixes of the corresponding points of $X^\omega$.
\begin{figure}
\hspace{1cm}\epsfig{file=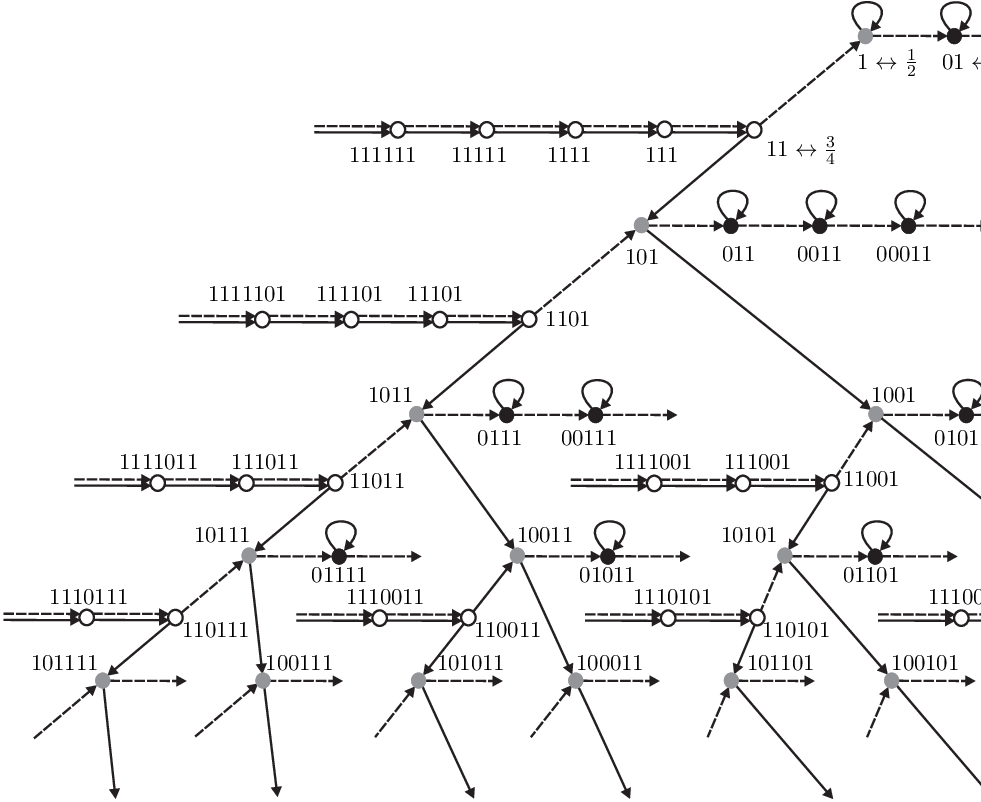,width=300pt}
\caption{The Schreier graph of the stabilizer of $10^\infty$\label{fig:graph12}}
\end{figure}

Since $F$ acts transitively on the set of dyadic rational numbers from $(0,1)$, this graph is also the Schreier graph of any such number, in which the base point is moved to the vertex corresponding to that number. This is also the case for  any point of the Cantor set $X^\omega$ that ends with $0^\infty$ or $1^\infty$ except $0^\infty$ and $1^\infty$ themselves. We comment on this in Remark~\ref{rem:01} in the end of the paper.

For other rational points of $X^\omega$ we include here the description of Schreier graphs from~\cite{savchuk:thompson2} since we will use their structure for the calculations in Section~\ref{sec:generators}:

\begin{theorem}[\mbox{\cite[Theorem 6.1]{savchuk:thompson2}}]
\label{thm_schreier_rational}
Each rational point $b$ of the Cantor set $X^\omega$ except $0^\infty$ and $1^\infty$ can be uniquely written as either $b=1^n0vw^\infty$ or $b=0^m1v^\infty$ for some $v=v(0,1),w=w(0,1)\in X^*$ such that $w$ is not a proper power and the ending of $v$ differs from the one of $w$. The (pointed) Schreier graph $\Gamma_b$ of the action of $F$ on the orbit of $b$ is depicted in Figure~\ref{fig_correspondence_rational} and has the following structure:
\begin{enumerate}
\item the base point is labeled by $b$;
\item each vertex labeled by $10u$ for $u\neq \sigma^{i}(w^\infty)$ is a root of the tree hanging down from this vertex that is canonically isomorphic to the tree hanging down at the vertex $101$ in the Schreier graph $\Gamma_{10^\infty}$ shown in Figure~\ref{fig:graph12};
\item a path going up from vertex $10vw^\infty$ to vertex $10\sigma^{|w|-1}(w^\infty)$ turns left or right at the $k$-th step, if the $k$-th letter in $vw$ is 0 or 1 respectively;
\item if the first letter of $w$ is 0 then there is an edge from the vertex $10w^\infty$ to the vertex $10\sigma^{|w|-1}(w^\infty)$ labeled by $x_1$ (see Figure~\ref{fig_correspondence_rational});
\item if the first letter of $w$ is 1 then the vertex $110w^\infty$ is adjacent to vertices $10w^\infty$ and $10\sigma^{|w|-1}(w^\infty)$ via edges labeled by $x_0$ and $x_1$ respectively;
\item the path in the third item for the empty word $v$ and the edge or the path in the previous two items create a loop in the Schreier graph corresponding to $w$ which we will call the \emph{nontrivial loop};
\item each vertex adjacent to the grey vertex in the nontrivial loop, and not belonging to this loop is either
\begin{itemize}
\item the beginnings of geodesics isomorphic to geodesic $(1,01,001,\ldots)$ in $\Gamma_{10^\infty}$, or
\item the root of the tree hanging down and to the left at the vertex $11$ in $\Gamma_{10^\infty}$, or
\item the root of the tree hanging down and to the right at the vertex $101$ in $\Gamma_{10^\infty}$.
\end{itemize}
\end{enumerate}
\end{theorem}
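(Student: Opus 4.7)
The plan is to read off $\Gamma_b$ by iterating the explicit prefix-rewriting rules for $x_0^{\pm 1}$ and $x_1^{\pm 1}$ given in~\eqref{eqn_cantor_action}, and to use the already-understood graph $\Gamma_{10^\infty}$ as a template away from the periodic part of $b$. The existence and uniqueness of the decomposition $b=1^n0vw^\infty$ or $b=0^m1v^\infty$ is a standard fact about eventually periodic sequences; I will focus on the first case, since the second is symmetric under swapping the roles of $0$ and $1$.

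First I would verify that each generator alters only a bounded-length prefix (length at most three) of any $u\in X^\omega$. Consequently the tail of every word in the orbit of $b$ is, after chopping off a suitable prefix, one of the $|w|$ shifts $\sigma^i(w^\infty)$, $0\leq i<|w|$. This pins down the orbit as a set and splits it into a finite \emph{core} consisting of the vertices of the form $10\sigma^i(w^\infty)$ together with trees or geodesics attached to these core vertices. Items (ii) and (vii) then reduce to checking how the rules in~\eqref{eqn_cantor_action} act on words beginning $10u'$ with $u'$ \emph{not} a shift of $w^\infty$: in that regime the generator's action is dictated by the first two or three letters only and is indistinguishable from its action on the corresponding vertex of $\Gamma_{10^\infty}$, so the local neighborhood is canonically isomorphic.

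Next I would analyze edges inside the finite core. At a vertex $10\sigma^i(w^\infty)$ whose first few letters agree with those of a ``generic'' dyadic vertex of $\Gamma_{10^\infty}$, the local action of $x_0$ and $x_1$ is again identical to the dyadic case; only those edges whose application crosses the periodic boundary behave differently. A direct case analysis of~\eqref{eqn_cantor_action}, split according to whether $w$ begins with $0$ or $1$, yields items (iv) and (v), and these together with the tree paths produce the nontrivial loop in (vi). The path-direction rule in (iii) then follows by induction on $k$: comparing one step of the path with one application of the prefix-rewriting rule shows that moving up reads off the next letter of $vw$, turning left for $0$ and right for $1$ in accordance with the embedding of the tree into $\Gamma_{10^\infty}$.

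The main obstacle will be ensuring that no two formally distinct vertices get accidentally identified, i.e., that the core is really a cycle of length $|w|$ rather than some proper divisor. This is exactly where the hypotheses that $w$ is not a proper power and that the final letter of $v$ differs from the final letter of $w$ come into play: any unexpected identification would force a nontrivial periodicity of $w$ or would cause $v$ to be extendable into $w^\infty$, contradicting uniqueness of the decomposition. Once nondegeneracy is settled, items (ii)--(vii) assemble straightforwardly into the picture in Figure~\ref{fig_correspondence_rational}, and transitivity of $F$ on the orbit of $b$, which is evident from the prefix-rewriting description of the generators, confirms that nothing outside this description appears in $\Gamma_b$.
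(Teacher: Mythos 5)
This statement is not proved in the paper at all: it is Theorem~6.1 of~\cite{savchuk:thompson2}, quoted verbatim as an external ingredient, so there is no internal proof to compare you against; the natural benchmark is the original construction in~\cite{savchuk:thompson2}, which does proceed, as you propose, by direct analysis of the prefix-rewriting rules~\eqref{eqn_cantor_action}. Your skeleton --- generators alter only a bounded prefix, hence the orbit lies inside the set of sequences whose tail is a shift of $w^\infty$; a finite core of vertices $10\sigma^i(w^\infty)$ closed into a loop by a case analysis on the first letter of $w$; generic vertices carrying copies of structures from $\Gamma_{10^\infty}$; nondegeneracy of the loop from $w$ not being a proper power and the ending condition on $v$ --- is the right general route.

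As written, though, there are concrete gaps. First, the claim that the subgraph hanging below a vertex $10u$ with $u\neq\sigma^i(w^\infty)$ is ``canonically isomorphic'' to the tree below $101$ in $\Gamma_{10^\infty}$ cannot be settled by looking at two or three leading letters: these are infinite subgraphs, and you need a recursive or inductive argument exhibiting an explicit vertex correspondence compatible with all four moves $x_0^{\pm1},x_1^{\pm1}$, and showing that no edge leaves the subtree and no two of its vertices get identified; the same remark applies to the three structures listed in item~7. Second, your bounded-prefix observation gives only one inclusion (the orbit is contained in the set of sequences with tail a shift of $w^\infty$); the statement that the entire described figure is the component of $b$ requires showing that the described graph is closed under all generators and connected, or equivalently that the sequences with a fixed period class form a single $F$-orbit. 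Your appeal to ``transitivity of $F$ on the orbit of $b$, evident from the prefix-rewriting description'' is circular as phrased (transitivity on an orbit is a tautology), and the fact actually needed is nontrivial --- it is Proposition~3.2.3 of~\cite{matucci:phd08}, invoked elsewhere in the paper. Third, the reduction of the case $b=0^m1v^\infty$ to the case $b=1^n0vw^\infty$ ``by swapping the roles of $0$ and $1$'' fails: the swap corresponds to conjugation by $t\mapsto1-t$, i.e.\ to the automorphism $\varphi$ of the paper, and $\varphi$ does not preserve the generating set (for instance $\varphi(x_1)=y_1\notin\{x_0^{\pm1},x_1^{\pm1}\}$), so it does not induce a label-preserving isomorphism of Schreier graphs --- indeed Figure~\ref{fig_correspondence_rational} is visibly not symmetric under such a relabeling. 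The two normal forms merely locate the base point inside one and the same graph, and the second form should be handled by the same transitivity/connectivity argument, not by a symmetry.
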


\begin{figure}
\begin{center}
\epsfig{file=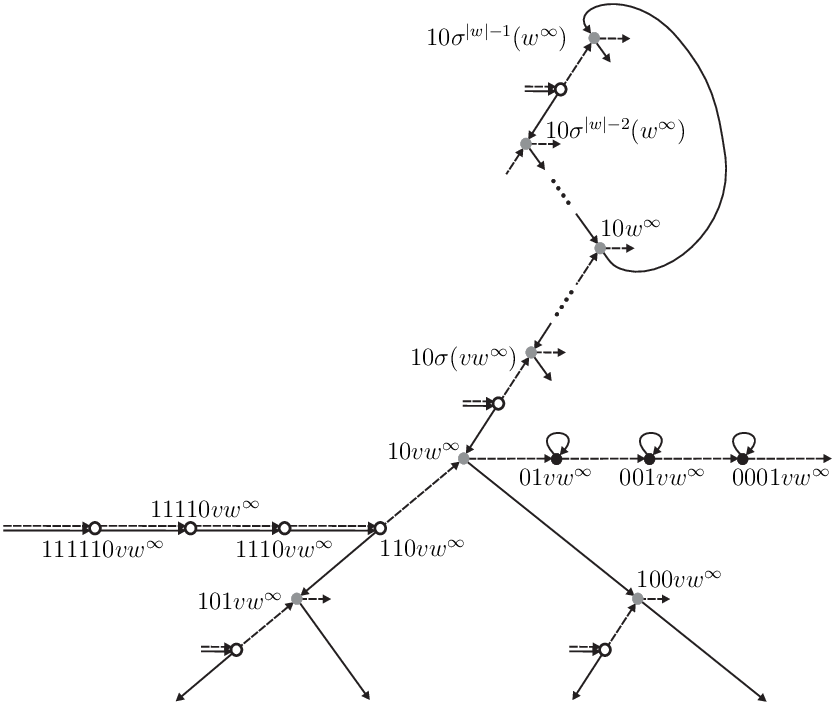,width=300pt}
\end{center}
\caption{The Schreier graph of the stabilizer of a rational point of the Cantor set\label{fig_correspondence_rational}}
\end{figure}

We will modify the description above into the one that works better for our purposes. Namely, the nontrivial loop in Figure~\ref{fig_correspondence_rational} can be shifted down, so that all of its vertices other than $10w^\infty$ itself are below the vertex $10w^\infty$ (which will be on the top of the loop now and will play a role of the root of the tree-like structure made of grey vertices). In this case every grey vertex in $\Gamma_b$ can be identified by a finite word (not necessarily unique) over the alphabet $\{A,B\}$, where $A$ and $B$ correspond to $x_0^{-1}x_1$ and $x_1$: to a word $W(A,B)\in\{A,B\}^*$ we associate the grey vertex that is the endpoint of the path in $\Gamma_b$ initiating at $10w^\infty$ and labeled by $W(x_0^{-1}x_1,x_1)$. In other words, $\Gamma_b$ has now a tree-like structure, in which the root $10w^\infty$ is labeled by the empty word $\varepsilon$ over $\{A,B\}$ and if a grey vertex has a label $W$, then the labels of the grey vertices right below it are $WA$ (the left one) and $WB$ (the right one).

For a finite word $U=a_1a_2\ldots a_n$ (over any alphabet) we denote by $U^R$ the \textit{reverse word} obtained by reading $U$ from right to left, i.e., $U^R=a_na_{n-1}\ldots a_1$. Thus, taking into account the notation given in the introduction, for the word $w\in\{0,1\}^*$ representing the period in $b$, $w^R(B,A)$ denotes a word over $\{A,B\}$ obtained from $w^R$ by replacing $0$ and $1$ by $B$ and $A$, respectively. With the above identification of the set of vertices of $\Gamma_b$ and $\{A,B\}^*$, each grey vertex in $\Gamma_b$, except those labeled by the prefixes of $w^R(B,A)$ (vertices of the nontrivial loop), is the root of the tree hanging down from this vertex that is canonically isomorphic to the tree hanging down at the vertex $101$ in the Schreier graph $\Gamma_{10^\infty}$ shown in Figure~\ref{fig:graph12} (this corresponds to item 2 in Theorem~\ref{thm_schreier_rational}). The vertex labeled by the prefix of $w^R(B,A)$ of length $|w|-1$ has only one grey vertex right below it, while the second connection (either via an edge labeled by $x_1$, if the first letter of $w$ is 0, or via a path of length 2 labeled by $x_0^{-1}x_1$, if the first letter of $w$ is 1) joins it to the root vertex labeled by $10w^\infty$.

To illustrate the construction, we give an example of the Schreier graph of the stabilizer of $10(0100)^\infty$ (that corresponds to the rational number $\frac4{15}\in (0,1)$) in Figure~\ref{fig:schreier0100}. Here we have $w(0,1)=0100$, so $w^R(B,A)=BBAB$ with the prefix $BBA$ of length $|w|-1=3$. Thus, on the figure the vertex $BBA=B^2A$ corresponding to the point $(10)100(0100)^\infty=10\sigma(w^\infty)$ is connected by an edge denoted by $e_w$ and labeled by $x_1$ (since the first letter of $w$ is 0) to the root vertex $10(0100)^\infty$.

\begin{figure}
\epsfig{file=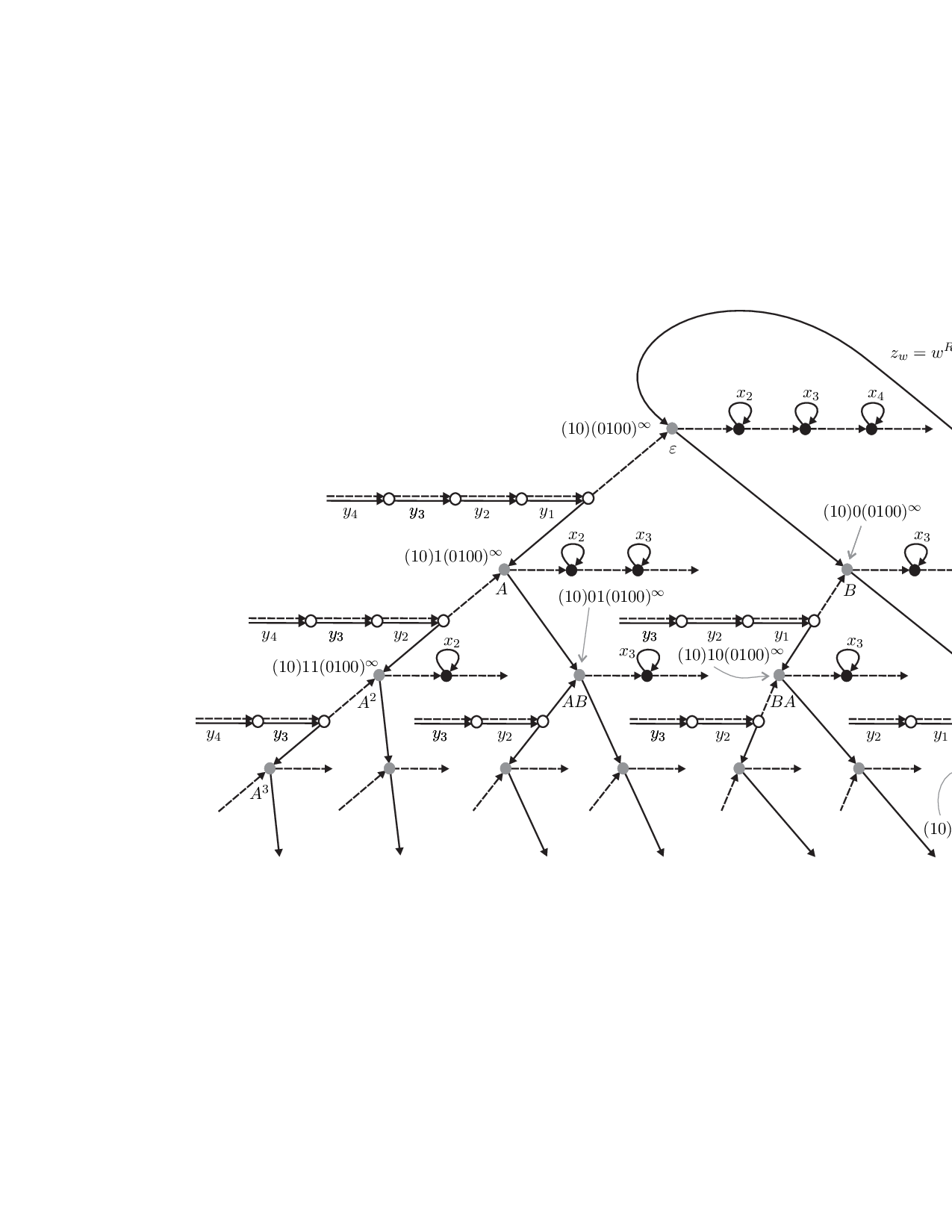,width=320pt}
\vspace{1cm}
\caption{The Schreier graph of the stabilizer of $(0100)^\infty\leftrightarrow\frac4{15}$\label{fig:schreier0100}}
\end{figure}

Now we can see that all the grey vertices in $\Gamma_b$ can be identified \emph{uniquely} by words over $\{A,B\}^*$ that do not have $w^R(B,A)$ as a prefix.

\section{Generating set for the stabilizers of rational points}
\label{sec:generators}

In this section we apply Reidemeister-Schreier procedure to Schreier graphs $\Gamma_b$ to compute the explicit generating sets for the stabilizers of rational points of $X^\omega$. 

The Reidemeister-Schreier procedure works as follows (see~\cite{lyndon_s:comb_grp_theory} for the combinatorial description of the procedure equivalent to the explanation below). Suppose we have a group $G$ generated by a finite set $S=\{s_1, s_2, \ldots , s_n\}$ that has a presentation $G\cong\langle S\mid R\rangle$, where $R$ is a set of relators written as reduced words in $S\cup S^{-1}$. Let $H$ be a subgroup of $G$ with the corresponding pointed Schreier graph $\Gamma=\Gamma(G,S,H)$ with $H$ being the base point in $\Gamma$ (so that $H$ is the stabilizer of this base point). The Reidemeister-Schreier procedure constructs the presentation for $H$ via the following steps.

\begin{enumerate}
\item Choose a spanning tree $T$ (a subtree in the graph that includes all the vertices) in $\Gamma$.
\item Each edge $e$ in $\Gamma$ that is not in $T$ gives rise to a Schreier generator of $H$. Specifically, this generator is given by a word over $\{s_1, \dots , s_n,s_1^{-1},\ldots, s_n^{-1}\}$ that is read along the path of the form $l_{i(e)}\cdot e \cdot l_{t(e)}^{-1}$, where $i(e)$ and $t(e)$ are the initial and terminal vertices of $e$, respectively, and for a vertex $v$ of $\Gamma$ $l_{v}$ is the unique path in the spanning tree from the base point in $\Gamma$ to $v$.
\item The collection $S_H$ of all Schreier generators constructed in part 2 forms a generating set for $H$.
\item For each vertex $v$ of $\Gamma$ and each relator $r\in R$ construct a Schreier relator of $H$ by reading $r$ in $\Gamma$ starting from $v$ and recording the generators from $S_H$ corresponding to the edges not in the spanning tree $T$.
\item The collection $R_H$ of all Schreier relators constructed in part 4 is the set that defines the presentation of $H$ as $H\cong\langle S_H\mid R_H\rangle$.
\end{enumerate}

If $G$ is finitely presented and $H$ has a finite index in $G$ the procedure immediately yields a finite presentation for $H$. However, it can also be applied to subgroups of infinite index and, as we will see below, can still yield a finite presentation.

We will now apply the Reidemeister-Schreier procedure to $\St_F(a)$ for a rational point $a=10w^\infty$ of the Cantor set $X^\omega$ with a finite word $w$ that is not a proper power, using the description of the corresponding pointed Schreier graph $\Gamma_a$ that we discussed in the previous section, in which point $a$ plays a role of the root of the tree consisting of grey vertices. There is an almost natural choice of a spanning tree $T$ in $\Gamma_a$ consisting of all edges except the loops at the black rays, the bottom edges of the white rays, and the edge, that we denote by $e_w$, labeled by $x_1$ coming to the root vertex $10w^\infty$ either from a grey vertex $10\sigma(w)^\infty$ if the first letter of $w$ is 0 (as shown in Figure~\ref{fig:schreier0100}) or from a white vertex connected to that grey vertex if the first letter of $w$ is 1. By Reidemeister-Schreier procedure $\St_F(a)$ is generated by the Schreier generators corresponding to the edges that are not included in the spanning tree. Let us compute these generators and set up notation for them.

Every edge $e$ that is not in the spanning tree, except for $e_w$, can be identified uniquely by the closest grey vertex to the terminal vertex of $e$, and the distance $n$ from the terminal vertex of $e$ to that grey vertex. More precisely, for each grey vertex $v_W$ of $\Gamma_a$ labeled by $W=W(A,B)$, where $A$ and $B$ correspond to $x_0^{-1}x_1$ and $x_1$, we define two families of Schreier generators of $\St_F(a)$. Let $x_{W,n}$ be the Schreier generator corresponding to the $n$-th loop (counting from the left) along the black ray stemming out of $v_W$, and let $y_{W,n}$ be the Schreier generator corresponding to the $n$-th horizontal bottom edge (counting from the right) in the white ray stemming out of the white vertex right below $v_W$. Finally, let $z_w$ denote the Schreier generator corresponding to the edge $e_w$. From the structure of the graph, according to the Reidemeister-Schreier procedure, we calculate:
\[\begin{array}{lcl}
x_{W,n}&=&W(x_0^{-1}x_1, x_1) \cdot x_0^{n}x_1x_0^{-n} \cdot  W(x_0^{-1}x_1, x_1)^{-1},\quad n\geq 1,\\[2mm]
y_{W,n}&=&W(x_0^{-1}x_1, x_1) \cdot  x_0^{-1}\cdot x_0^{-n}x_{1}x_0^{n-1}x_0 \cdot  W(x_0^{-1}x_1, x_1)^{-1},\quad n\geq 1,\\[2mm]
z_w&=&w^R(x_1,x_0^{-1}x_1).
\end{array}
\]

Thus, according to Reidemeister-Schreier procedure we obtain the following lemma.

\begin{lemma}
\label{lem:infinite_gen_set}
For a rational point $a=10w^\infty$ of $X^\omega$, where $w\in X^*$ is not a proper power, the stabilizer of $a$ in $F$ is generated by the following elements of $F$:
\[St_F(a)=  \langle x_{W,n}, y_{W,n}, z_w \mid n \geq 1, W\in\{A,B\}^*\ \text{that does not have $w^R(B,A)$ as a prefix}\rangle.\]
\end{lemma}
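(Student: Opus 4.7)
The plan is to apply the Reidemeister-Schreier procedure to the Schreier graph $\Gamma_a$ using the spanning tree $T$ described in the paragraphs preceding the lemma. The key local computations have essentially been set up already; the proof is a matter of organizing these observations into a clean argument.

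First, I would verify that the subgraph $T$---obtained from $\Gamma_a$ by deleting all loops at black vertices, all horizontal bottom edges of the white rays, and the single edge $e_w$---is indeed a spanning tree of $\Gamma_a$. Using the tree-like description from Section~\ref{sec:schreier}, each deleted edge is exactly the extra edge breaking one of the cycles of $\Gamma_a$: the black-vertex loops and the bottom edges of white rays account for all the local cycles in the subtrees hanging from grey vertices, while $e_w$ is the unique edge breaking the nontrivial loop at the root. Removing them simultaneously yields a connected acyclic subgraph that still covers every vertex.

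Second, I would enumerate the edges in $\Gamma_a \setminus T$ by their natural parameters---pairs $(W,n)$ for the black-ray loops and for the white-ray bottom edges, together with the single edge $e_w$---and compute for each such edge $e$ the Schreier generator $\ell_{i(e)} \cdot e \cdot \ell_{t(e)}^{-1}$, where $\ell_v$ denotes the unique $T$-path from the base point $a$ to $v$. The identification of grey vertices with words in $\{A,B\}^*$ given in Section~\ref{sec:schreier} yields $\ell_{v_W} = W(x_0^{-1}x_1, x_1)$ immediately, and the black and white rays are traversed inside $T$ by powers of $x_0$. Reading off the labels along $\ell_{i(e)} \cdot e \cdot \ell_{t(e)}^{-1}$ then produces the claimed formulas for $x_{W,n}$ and $y_{W,n}$ directly from the graph. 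For $z_w$, the terminal vertex of $e_w$ lies at the end of the nontrivial loop, and the $T$-path to it spells out $w^R(x_1, x_0^{-1}x_1)$ under the correspondence set up in Section~\ref{sec:schreier}; accounting for $e_w$ and the trivial return to the base point gives the stated formula.

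Finally, I would invoke the Reidemeister-Schreier theorem to conclude that the collection of these Schreier generators, indexed by the non-tree edges, generates $\St_F(a)$---which is exactly the content of the lemma. The main bookkeeping obstacle is treating the two sub-cases distinguished in Theorem~\ref{thm_schreier_rational} (whether the first letter of $w$ is $0$ or $1$) in the computation of $z_w$, and verifying that the resulting formula $z_w = w^R(x_1, x_0^{-1}x_1)$ is the same in both cases once the labels around the nontrivial loop are tracked correctly; everything else is a mechanical application of the procedure.
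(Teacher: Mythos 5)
Your proposal is correct and follows essentially the same route as the paper: the authors likewise take the spanning tree obtained by removing the black-vertex loops, the bottom edges of the white rays, and $e_w$, read off the Schreier generators $x_{W,n}$, $y_{W,n}$, $z_w$ along the tree paths (with the two sub-cases for the first letter of $w$ absorbed into the single formula $z_w=w^R(x_1,x_0^{-1}x_1)$), and invoke the Reidemeister--Schreier procedure to conclude these generate $\St_F(a)$. The only nitpick is that the terminal vertex of $e_w$ is the root itself (so the return path is trivial and the tree path runs to the \emph{initial} vertex of $e_w$), but this does not affect the argument.
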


Most of the arguments below are devoted to the simplification of the generating set in the above lemma. First, we observe that
\[x_{\varepsilon,n}=x_0^{n}x_1x_0^{-n}=x_{n+1},\]
where $x_{n+1}$ is one of the generators of $F$ from the infinite presentation~\eqref{eqn:infinite_presentation} of $F$. We also denote
\[y_n:=y_{\varepsilon,n}=x_0^{-1}\cdot x_0^{-n}x_{1}x_0^{n-1}x_0=x_0^{-n-1}x_{1}x_0^{n}.\]

It is shown in~\cite[Example~2.3]{cannon_fp:intro_thompson} that the action of $x_n$ on $[0,1]$ is as follows
\[\begin{array}{llll}
x_n(t) & =&
\begin{cases}
t, & 0\leq t \leq 1-\frac1{{2}^{n}},\\
\frac{t}2+\frac12-\frac1{2^{n+1}}, & 1-\frac1{2^n} < t \leq 1-\frac1{2^{n+1}},\\
t-\frac1{2^{n+2}}, & 1-\frac1{2^{n+1}} < t \leq 1-\frac1{2^{n+2}},\\
2t-1, &1-\frac1{2^{n+2}} < t \leq1,\\
\end{cases}
\end{array}
\]
or, equivalently, $x_n$ can be represented by the rectangle diagram below.

\vspace{3mm}
\hspace{4.1cm}
\beginpicture
\setcoordinatesystem units <4mm, 4mm> point at 0 0
\setlinear
 \plot 8 2          20 2   20 4   8 4   8 2 /
 \plot 17 2        17 4 /
 \plot 18.5 4     17.75 2 /
 \plot 19.25 4   18.5 2 /
\put {$1-\frac1{2^{n}}$}  at 17 5
\put {$x_n=$}   at 6 3
\endpicture
\vspace{0.8cm}

In the next lemma we will compute the action of $y_n$ on $[0,1]$.

\begin{lemma}
\label{lem:y}
The function $y_n$ is given by:
\[\begin{array}{llll}
y_n(t) & =&
\begin{cases}
2t, & 0\leq t \leq \frac1{2^{n+2}}\\
t+\frac1{2^{n+2}}, & \frac1{2^{n+2}}<t \leq \frac1{2^{n+1}},\\
\frac{t}2+\frac12-\frac1{2^{n+1}}, & \frac1{2^{n+1}} < t \leq \frac1{2^n},\\
t, &\frac1{2^n} < t \leq1,\\
\end{cases}
\end{array}
\]
or, equivalently, by the following rectangle diagram:

\vspace{3mm}
\hspace{4.1cm}
\beginpicture
\setcoordinatesystem units <4mm, 4mm> point at 0 0
\setlinear
 \plot 8 2          20 2   20 4   8 4   8 2 /
 \plot 11 4   11 2 /
 \plot 9.5 4   10.25 2 /
 \plot 8.75 4   9.5 2 /

\put {$\frac1{2^{n}}$}  at 11 5
\put {$y_n=$}   at 6 3
\endpicture
\vspace{0.8cm}
\end{lemma}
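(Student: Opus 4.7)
The cleanest approach is to exploit the central symmetry between $y_n$ and $x_n$ about $(\tfrac12,\tfrac12)$ that is foreshadowed in the introduction. Define the involution $\phi\colon [0,1]\to[0,1]$ by $\phi(t)=1-t$; although $\phi\notin F$, conjugation by $\phi$ is a well-defined involution of the group of all piecewise linear homeomorphisms of $[0,1]$, and since $\phi^2=\mathrm{id}$ it satisfies $\phi(fg)\phi=(\phi f\phi)(\phi g\phi)$. First I would verify, by direct case analysis on the piecewise formulas in~\eqref{eqn:x_0x_1}, the two base identities $\phi x_0\phi=x_0^{-1}$ (a three-case check) and $\phi x_1\phi=y_1=x_0^{-2}x_1x_0$ (a four-case check).

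With these identities and the relation $x_n=x_0^{n-1}x_1x_0^{-(n-1)}$ coming from the infinite presentation~\eqref{eqn:infinite_presentation}, one obtains
\[
\phi x_n\phi=(\phi x_0\phi)^{n-1}(\phi x_1\phi)(\phi x_0\phi)^{-(n-1)}=x_0^{-(n-1)}\,y_1\,x_0^{n-1}=x_0^{-n-1}x_1x_0^n=y_n,
\]
which is equivalent (under the right-action convention) to the functional identity $y_n(t)=1-x_n(1-t)$. Substituting the four-piece formula for $x_n$ recorded immediately above the lemma and simplifying piece by piece then yields the four-piece formula for $y_n$; the rectangle diagram for $y_n$ is the $180^{\circ}$ central rotation of the rectangle diagram for $x_n$.

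A more direct alternative would be to compute $y_n(t)=x_0^n(x_1(x_0^{-n-1}(t)))$ by brute force. This requires first establishing by induction on $n$ that $x_0^{-n-1}$ maps the four subintervals $[0,\tfrac1{2^{n+2}}]$, $[\tfrac1{2^{n+2}},\tfrac1{2^{n+1}}]$, $[\tfrac1{2^{n+1}},\tfrac1{2^n}]$, $[\tfrac1{2^n},1]$ linearly onto $[0,\tfrac12]$, $[\tfrac12,\tfrac34]$, $[\tfrac34,\tfrac78]$, $[\tfrac78,1]$ respectively, which are precisely the domains of the four affine pieces of $x_1$. Applying $x_1$ and then $x_0^n$ (which compresses $[0,\tfrac12]$ by $1/2^n$ and acts by the known three-piece formula on $[\tfrac12,1]$) to each of these subintervals then yields the claimed formula. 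The symmetry route is shorter and avoids this auxiliary induction.

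The main obstacle in either approach is the piecewise-linear bookkeeping. In the symmetry approach it is entirely concentrated in the two base-case identities, after which the conclusion follows by pure algebra from the already-recorded formula for $x_n$; in the direct approach it is spread across the general-$n$ description of $x_0^{-n-1}$.
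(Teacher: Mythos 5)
Your argument is correct, but it takes a genuinely different route from the paper's. The paper proves Lemma~\ref{lem:y} by induction on $n$: the base case computes $y_1=x_0^{-2}x_1x_0$ by stacking rectangle diagrams, and the inductive step computes $y_{n+1}=x_0^{-1}y_nx_0$ the same way, using that $x_0^{-1}$ and $x_0$ act as $2t$ and $t/2$ near $0$. The central symmetry $\varphi(f)(t)=1-f(1-t)$ is introduced only \emph{after} this lemma (Lemmas~\ref{lem:symmetry}--\ref{lem:phi_xy}), and the identity $\varphi(x_n)=y_n$ is there deduced \emph{from} Lemma~\ref{lem:y}. You invert that logical order: after observing that conjugation by $t\mapsto 1-t$ is an automorphism preserving $F$ (the content of Lemmas~\ref{lem:symmetry} and~\ref{lem:automorphism}), you verify $\varphi(x_0)=x_0^{-1}$ and $\varphi(x_1)=x_0^{-2}x_1x_0$ directly, conjugate $x_n=x_0^{n-1}x_1x_0^{-(n-1)}$ to get $\varphi(x_n)=x_0^{-n-1}x_1x_0^{n}=y_n$, and read off the formula for $y_n$ from the quoted formula for $x_n$. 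This is non-circular and has the bonus of establishing Lemma~\ref{lem:phi_xy} simultaneously; the hidden cost is that verifying $\varphi(x_1)=x_0^{-2}x_1x_0$ still forces you to compute the composite $x_0^{-2}x_1x_0$ as a piecewise map, which is essentially the paper's base case, so the piecewise bookkeeping is comparable rather than eliminated. (In your brute-force alternative, note also that $x_0^{n}$ restricted to $[\frac12,1]$ is not given by the three-piece formula for $x_0$ when $n\geq 2$; one must iterate, so that sketch needs slightly more care.) One point you should be aware of: carrying out your substitution $y_n(t)=1-x_n(1-t)$ yields $\frac t2+\frac1{2^{n+1}}$ on $\bigl(\frac1{2^{n+1}},\frac1{2^n}\bigr]$, not the printed $\frac t2+\frac12-\frac1{2^{n+1}}$; the printed third piece is a typo (it is discontinuous at $\frac1{2^n}$ for $n\geq 2$, and agrees with the correct expression only when $n=1$), so your derivation, like the rectangle diagram in the statement, produces the correct function, and you should not force your computation to match the displayed formula verbatim.
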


\begin{proof}
We prove the claim by induction on $n$. For the base case $n=1$ we have $y_1=x_0^{-2}x_1x_0$, so we calculate the expression for $y_1$ by rectangle diagrams (note that since here, unlike in~\cite{cannon_fp:intro_thompson}, we consider right actions, we juxtapose the rectangle diagrams from top to bottom).

\vspace{3mm}
\hspace{2cm}
\beginpicture
\setcoordinatesystem units <4mm, 4mm> point at 0 0
\setlinear
\plot 2 0   14 0   14 8   2 8   2 0 /
\plot 2 6   14 6 /
\plot 2 4   14 4 /
\plot 2 2   14 2 /
\plot 18 2   30 2   30 4   18 4   18 2 /
\plot 8 4   8 2 /

\plot 11 4   9.5 2 /
\plot 12.5 4   11 2 /

\plot 8 8   11 6 /
\plot 5 8   8 6 /
\plot 8 6   11 4 /
\plot 5 6   8 4 /
\plot 8 2   5 0 /
\plot 11 2   8 0 /

 \plot24 4   24 2 /
 \plot 21 4   22.5 2 /
 \plot 19.5 4   21 2 /

\setdashes<1mm>

\plot 11 6   12.5 4 /

\plot 3.5 8   5 6 /
\plot 9.5 2   6.5 0 /

\put {$x_0^{-1}$=}   at 0.7 7
\put {$x_0^{-1}$=}   at 0.7 5
\put {$x_1$=}   at 1 3
\put {$x_0$=} at 1 1
\put {=} at 16 3
\put {= $y_{1}$} at 31.9 3
\put {$\frac1{2}$} at 8 9
\put {$\frac1{2}$} at 24 5
\put {$\frac1{4}$} at 5 9
\put {$\frac1{4}$} at 21 5
\put {$\frac1{8}$} at 19.5 5
\endpicture
\vspace{0.5cm}

The induction step is proved similarly. Assuming that the claim holds true for $y_n$ for some fixed $n$, since $y_{n+1}=x_0^{-1}y_nx_0$, we calculate using the corresponding rectangle diagrams, where we concentrate only on the left side because $x_0$ coincides with $\frac t2$ and $x_0^{-1}$ coincides with $2t$ for all $t\in[0,\frac14]$.

\vspace{3mm}
\hspace{1cm}
\beginpicture
\setcoordinatesystem units <4mm, 4mm> point at 0 0
\setlinear

\plot 2 0   14 0 /
\plot 2 2   14 2 /
\plot 2 6   14 6 /
\plot 2 4   14 4 /
\plot 2 2   14 2 /
\plot 2 0   2 6 /
\plot 30 4   18 4   18 2   30 2 /
\plot 5 4   6.5 2 /
\plot 3.5 4   5 2 /
 \plot21 4   21 2 /
 \plot 19.5 4   20.25 2 /
 \plot 18.75 4   19.5 2 /
\plot 8 4   8 2 /

\setdashes<1mm>

\plot 3.5 6   5 4 /
\plot 2.75 6   3.5 4 /
\plot 5 2   3.5 0 /
\plot 6.5 2  4.25 0 /
\plot 5 6   8 4 /
\plot 8 2   5 0 /

\put {$x_0^{-1}$=}   at 0.7 5
\put {$y_n$=}   at 1 3
\put {$x_0$=} at 1 1
\put {=} at 16 3
\put {= $y_{n+1}$} at 31.9 3
\put {$\frac1{2^n}$} at 8 7
\put {$\frac1{2^{n+1}}$} at 21 5
\put {$\frac1{2^{n+1}}$} at 5 7
\endpicture

\end{proof}

\begin{corollary}
\label{cor:xy_commute}
For each $n\geq 1, m\geq 1$ the elements $x_n$ and $y_m$ commute.
\end{corollary}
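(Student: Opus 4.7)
The plan is to prove Corollary~\ref{cor:xy_commute} by a direct support argument based on the explicit piecewise-linear formulas for $x_n$ and $y_m$ that were just established (Lemma~\ref{lem:y} together with the formula for $x_n$ quoted immediately before it). I will show that the supports of $x_n$ and $y_m$, i.e.\ the sets $\{t\in[0,1]\mid x_n(t)\neq t\}$ and $\{t\in[0,1]\mid y_m(t)\neq t\}$, are disjoint subsets of $[0,1]$, from which the commutation relation is immediate.

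The key observation is that for every $n\geq 1$, the formula for $x_n$ shows $x_n(t)=t$ on $\bigl[0,\,1-\tfrac{1}{2^n}\bigr]$, so $x_n$ is supported in $\bigl(1-\tfrac{1}{2^n},\,1\bigr]\subseteq\bigl(\tfrac12,1\bigr]$. Dually, Lemma~\ref{lem:y} shows that $y_m(t)=t$ on $\bigl[\tfrac{1}{2^m},1\bigr]$, so $y_m$ is supported in $\bigl[0,\,\tfrac{1}{2^m}\bigr]\subseteq\bigl[0,\tfrac12\bigr]$. Since for every $n,m\geq 1$ one has $\tfrac{1}{2^m}\leq \tfrac12\leq 1-\tfrac{1}{2^n}$, the supports are disjoint.

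With disjoint supports, $x_n$ acts as the identity on the support of $y_m$ and vice versa, so for every $t\in[0,1]$ we get $x_n(y_m(t))=y_m(x_n(t))$, i.e.\ $x_ny_m=y_mx_n$ in $F$ (using the right-action convention~\eqref{eqn_conv}). I expect no real obstacle here; the whole point is that the explicit formulas already derived make the supports manifest, and the only thing to double-check is the bookkeeping of the right-action convention, which does not affect the conclusion since commutation of functions is independent of the side on which the action is written.
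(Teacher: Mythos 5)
Your proof is correct and follows essentially the same route as the paper: the paper's proof is exactly the disjoint-support argument, stated via the Cantor set ($x_n$ moves only points of $1X^\omega$, $y_m$ only points of $0X^\omega$), while you read the same disjointness off the explicit interval formulas from Lemma~\ref{lem:y} and the formula for $x_n$. Nothing further is needed.
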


\begin{proof}
The claim follows from the fact that the supports of $x_n$ and $y_m$ are disjoint: $x_n$ moves points only in $1X^\omega$ and $y_m$ only in $0X^\omega$.
\end{proof}

To compute the relations between $y_i$'s we consider the following transformation of $F$ (to be justified in the next lemma):
\[
\begin{array}{crcl}
\varphi\colon&F&\longrightarrow&F\cr
&f(t)&\longmapsto&1-f(1-t)
\end{array}
\]
where $t$ is a real number.

\begin{lemma}
\label{lem:symmetry}
For each $f\in F$,  $\varphi(f)$ is an element of $F$ whose graph is the image under the central symmetry (with respect to the center $(\frac12,\frac12)$ of $[0,1]^2$) of the graph of $f$.
\end{lemma}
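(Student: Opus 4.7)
The proof will proceed by two essentially independent verifications: a geometric identification of the graph of $\varphi(f)$, and a check that $\varphi(f)$ satisfies all the defining properties of $F$.

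First, I will address the geometric statement. Central symmetry about $(\tfrac12,\tfrac12)$ is the map $(t,s)\mapsto(1-t,1-s)$. I will show that a point $(t_0,s_0)$ lies on the graph of $\varphi(f)$ if and only if its image $(1-t_0,1-s_0)$ lies on the graph of $f$. Indeed, $s_0=\varphi(f)(t_0)=1-f(1-t_0)$ is equivalent to $1-s_0=f(1-t_0)$, which is exactly the condition that $(1-t_0,1-s_0)$ sits on the graph of $f$. Hence the two graphs are mirror images of one another under the claimed symmetry.

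Next, I will verify that $\varphi(f)\in F$ by checking each defining property in turn. For the endpoints, $\varphi(f)(0)=1-f(1)=0$ and $\varphi(f)(1)=1-f(0)=1$, so $\varphi(f)$ is a self-map of $[0,1]$. For monotonicity, if $t_1<t_2$ then $1-t_1>1-t_2$, so $f(1-t_1)>f(1-t_2)$, so $\varphi(f)(t_1)<\varphi(f)(t_2)$, giving a strict increase. For piecewise linearity and the breakpoint condition, observe that $\varphi(f)$ fails to be differentiable at a point $t$ precisely when $f$ fails to be differentiable at $1-t$; since $f$ has only finitely many such breakpoints and each is a dyadic rational, the same holds for $\varphi(f)$ (dyadic rationals are preserved by $t\mapsto 1-t$). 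Finally, on any interval of differentiability, the chain rule gives $\varphi(f)'(t)=-f'(1-t)\cdot(-1)=f'(1-t)$, so the slopes of $\varphi(f)$ are exactly the slopes of $f$, hence integer powers of $2$.

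There is no real obstacle here: the lemma amounts to a bookkeeping check that the algebraic formula $1-f(1-t)$ does what the geometric picture predicts, and that none of the constraints defining $F$ (dyadic breakpoints, powers of $2$ as slopes, monotonicity, fixing the endpoints) are violated by the substitution $t\leftrightarrow 1-t$. The only thing that deserves a moment of care is noting that $t\mapsto 1-t$ permutes the dyadic rationals in $[0,1]$, which is what ensures that the breakpoint constraint is preserved.
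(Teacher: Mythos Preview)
Your proof is correct and follows essentially the same approach as the paper: first establish the graph correspondence under the central symmetry $(t,s)\mapsto(1-t,1-s)$, then deduce membership in $F$ from this. The paper's version is terser, inferring all the defining properties of $F$ directly from the symmetry of the graph rather than checking endpoints, monotonicity, breakpoints, and slopes individually, but the content is the same.
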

\begin{proof}
The point $(t,f(t))$ is on the graph of $f$ if and only if $(1-t,1-f(t))$ is a point on the graph of $\varphi(f)(1-t)$ because
\[\big(1-t,1-f(t)\big)=\big(1-t, 1-f(1-(1-t))\big)=\big(1-t, \varphi(f)(1-t)\big).\]
Therefore, $\varphi(f)$ is also an increasing piecewise linear homeomorphisms from $[0,1]$ to itself that is differentiable everywhere except at finitely many dyadic rational numbers and such that on the intervals of differentiability the derivatives are integer powers of 2. Thus, $\varphi(f)$ is an element of $F$.
\end{proof}

\begin{lemma}
\label{lem:automorphism}
The map $\varphi$ is an automorphism of $F$ of order 2.
\end{lemma}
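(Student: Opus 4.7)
The plan is to verify the three required properties of $\varphi$ in order: (i) $\varphi$ is a homomorphism, (ii) $\varphi^2 = \mathrm{id}_F$, and (iii) $\varphi$ is nontrivial. By the preceding lemma we already know that $\varphi$ takes $F$ into $F$, so it remains only to check the algebraic properties, all of which follow from short direct computations.

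For the homomorphism property, I would carefully use the right-action convention $(fg)(t) = g(f(t))$ from Notation~\ref{eqn_conv}. Given $f,g\in F$ and $t\in[0,1]$, unfolding the definitions gives
\[
\varphi(fg)(t) = 1-(fg)(1-t) = 1-g\bigl(f(1-t)\bigr),
\]
while on the other side
\[
(\varphi(f)\varphi(g))(t)=\varphi(g)\bigl(\varphi(f)(t)\bigr)=1-g\bigl(1-\varphi(f)(t)\bigr)=1-g\bigl(f(1-t)\bigr).
\]
The two expressions agree, so $\varphi(fg)=\varphi(f)\varphi(g)$. This is the step where one must be most careful, since swapping the convention to left actions would reverse the order on the right-hand side; aside from that bookkeeping, there is nothing subtle here.

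Next, to see that $\varphi$ has order dividing $2$, I would compute $\varphi^2$ directly: for any $f\in F$ and any $t$,
\[
\varphi(\varphi(f))(t) = 1-\varphi(f)(1-t) = 1-\bigl(1-f(1-(1-t))\bigr) = f(t),
\]
so $\varphi^2 = \mathrm{id}_F$. This immediately implies that $\varphi$ is its own inverse, hence a bijection, and combined with the homomorphism property it shows that $\varphi$ is an automorphism of $F$. Finally, $\varphi$ is not the identity, since for instance $\varphi(x_0)\neq x_0$ (their graphs are related by central symmetry about $(\tfrac12,\tfrac12)$ and are visibly distinct by inspecting the breakpoints described in~\eqref{eqn:x_0x_1}); thus the order of $\varphi$ is exactly $2$.

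In summary, the entire argument is essentially two lines of substitution plus one observation that $\varphi$ is nontrivial; there is no real obstacle, only the need to respect the right-action convention in the homomorphism calculation.
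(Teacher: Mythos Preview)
Your proof is correct and essentially follows the paper's approach: both verify the homomorphism property by the same direct substitution using the right-action convention, and both argue $\varphi^2=\mathrm{id}$ (the paper appeals to the central-symmetry description from the previous lemma, while you unwind the definition algebraically, which amounts to the same thing). Your extra line checking $\varphi\neq\mathrm{id}$ to pin down the order as exactly $2$ is a nice touch the paper leaves implicit.
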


\begin{proof}
Since by Lemma~\ref{lem:symmetry} the graph of $\varphi(f)$ is symmetric to the graph of $f$, we immediately obtain that $\varphi$ is an involution, so it is both one-to-one and onto.
We only need to check that $\varphi$ is a homomorphism.
\begin{equation}
\begin{split}
\varphi(f(t)\circ g(t))& = \varphi(g(f(t))\\& = 1-g(f(1-t)) = 1-g(1-1+f(1-t))\\& = 1-g(1-[1-f(1-t)])\\&= [1-f(1-t)] \circ [1-g(1-t)]\\&=\varphi(f(t)) \circ \varphi(g(t))
\end{split}
\end{equation}
Thus, $\varphi$ is an automorphism.
\end{proof}

\begin{lemma}
\label{lem:phi_xy}
For each $n\geq 1$
\[\varphi(x_n)=y_n.\]
\end{lemma}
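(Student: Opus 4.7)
The plan is to leverage the fact, already established in Lemma~\ref{lem:automorphism}, that $\varphi$ is a group automorphism. This reduces the identity $\varphi(x_n) = y_n$ for all $n\geq 1$ to two base calculations, $\varphi(x_0) = x_0^{-1}$ and $\varphi(x_1) = y_1$, after which the general statement is extracted by applying $\varphi$ to the defining expression $x_n = x_0^{n-1} x_1 x_0^{-(n-1)}$.

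First I would compute $\varphi(x_0)$ directly from $\varphi(f)(t) = 1 - f(1-t)$, handling each of the three affine pieces of $x_0$ given in \eqref{eqn:x_0x_1} separately. Assembling the three images produces exactly the three-piece formula for $x_0^{-1}$, establishing $\varphi(x_0) = x_0^{-1}$. Next I would verify the base case $\varphi(x_1) = y_1$ using Lemma~\ref{lem:symmetry}: reflecting the rectangle diagram of $x_1$ through the center $(1/2, 1/2)$ of the unit square gives a diagram consisting of an identity piece on $[1/2, 1]$ together with three affine pieces on subintervals of $[0, 1/2]$ of lengths $1/8, 1/8, 1/4$ and slopes $2, 1, 1/2$ read left to right. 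This is precisely the rectangle diagram of $y_1$ given in Lemma~\ref{lem:y}.

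With these two base cases in hand, the general statement follows formally. Applying the automorphism $\varphi$ to $x_n = x_0^{n-1} x_1 x_0^{-(n-1)}$ and using $\varphi(x_0) = x_0^{-1}$ together with $\varphi(x_1) = y_1 = x_0^{-2} x_1 x_0$ yields
\[\varphi(x_n) \;=\; \varphi(x_0)^{n-1}\, \varphi(x_1)\, \varphi(x_0)^{-(n-1)} \;=\; x_0^{-(n-1)} \cdot x_0^{-2} x_1 x_0 \cdot x_0^{n-1} \;=\; x_0^{-(n+1)} x_1 x_0^n \;=\; y_n.\]

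The only step requiring any real verification is the base-case check $\varphi(x_1) = y_1$; once done, the rest is purely algebraic from the automorphism property. An equally viable, more visual alternative would skip the reduction and apply Lemma~\ref{lem:symmetry} directly to the generic rectangle diagram of $x_n$ shown just above Lemma~\ref{lem:y}: its image under central symmetry is visibly the rectangle diagram of $y_n$ displayed in Lemma~\ref{lem:y}, since the bump of $x_n$ on $[1-1/2^n, 1]$ reflects to a bump of the correct widths and reversed-order slopes on $[0, 1/2^n]$, while the identity part on $[0, 1-1/2^n]$ reflects to the identity part on $[1/2^n, 1]$. This sidesteps the algebraic manipulation entirely but requires careful bookkeeping of breakpoints.
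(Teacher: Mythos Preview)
Your proof is correct. The primary route you take---checking the base cases $\varphi(x_0)=x_0^{-1}$ and $\varphi(x_1)=y_1$ and then invoking the automorphism property on the word $x_n=x_0^{n-1}x_1x_0^{-(n-1)}$---is genuinely different from the paper's argument. The paper instead proves the statement for all $n$ at once by the visual route you sketch at the end: it first observes that passing from $f$ to $\varphi(f)$ reflects the rectangle diagram about its vertical symmetry axis, and then simply juxtaposes the already-drawn rectangle diagrams of $x_n$ and $y_n$ from Lemma~\ref{lem:y}. Your algebraic reduction has the advantage of isolating the single nontrivial verification ($\varphi(x_1)=y_1$) and making no appeal to the general diagram of $x_n$; the paper's approach is shorter and avoids the auxiliary computation of $\varphi(x_0)$, but relies on having both families of diagrams in front of the reader. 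Either is entirely adequate here.
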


\begin{proof}
If an element $f\in F$ sends a point $a\in[0,1]$ to a point $b=f(a)\in [0,1]$, then
\[\varphi(f)(1-a)=1-f(1-(1-a))=1-f(a)=1-b.\]
Therefore the rectangle diagram of $\varphi(f)$ is obtained from the rectangle diagram of $f$ by applying a reflection about the vertical symmetry axis of the rectangle.

\hspace{2.8cm}\beginpicture
\setcoordinatesystem units <4mm, 4mm> point at  0 0
\setlinear
\plot 0 0     12 0   12 2   0 2   0 0 /
\plot 3 2      9 0 /
\plot 18 0    30 0   30 2   18 2   18 0 /
\plot 21  0   27  2 /
\plot 13 1    17 1 /
\plot 16.8    1.2   17 1  /
\plot 16.8    0.8  17 1  /
\put    {$a$}          [b] at 3 2.2
\put    {$b$}          [b] at 9 -1
\put    {$1-a$}       [b] at 27 2.2
\put    {$1-b$}       [b] at 21 -1
\put    {$\varphi$} [b] at 15 1.2
\endpicture
\vspace{2mm}

Thus, the statement of the lemma immediately follows from Lemma~\ref{lem:y} and the rectangle diagram for $x_n$.
\end{proof}

\begin{corollary}
\label{cor:y_rel}
For each $n\geq 1$ and $0\leq k < n$
\[y_ky_ny_k^{-1}=y_{n+1}.\]
\end{corollary}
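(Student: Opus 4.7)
The plan is to obtain this corollary as an immediate transport of the known $x$-relations through the automorphism $\varphi$ of Lemma~\ref{lem:automorphism}. The key observation is that the infinite presentation~\eqref{eqn:infinite_presentation} already encodes the analogous relations among the $x_i$'s, namely $x_kx_nx_k^{-1}=x_{n+1}$ for $0\le k<n$, and by Lemma~\ref{lem:phi_xy} the automorphism $\varphi$ sends $x_i$ to $y_i$ for $i\ge 1$. So $\varphi$ is tailored to convert $x$-relations into $y$-relations with no further computation.

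Concretely, I would fix $n\ge 1$ and $1\le k<n$ and begin from the relation $x_kx_nx_k^{-1}=x_{n+1}$ in $F$. Applying $\varphi$ to both sides and using that $\varphi$ is a homomorphism with $\varphi(g^{-1})=\varphi(g)^{-1}$, I obtain
\[\varphi(x_k)\,\varphi(x_n)\,\varphi(x_k)^{-1}\;=\;\varphi(x_{n+1}).\]
Now Lemma~\ref{lem:phi_xy} replaces each $\varphi(x_i)$ by $y_i$, which yields $y_ky_ny_k^{-1}=y_{n+1}$, as claimed.

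The only point that needs a moment of care is the boundary case $k=0$ in the stated range, since $y_0$ has not been defined and Lemma~\ref{lem:phi_xy} is stated for $n\ge 1$ only. The natural remedy is to declare $y_0:=\varphi(x_0)$, after which the one-line automorphism argument above applies verbatim; alternatively one could verify the $k=0$ instance by hand using $y_n=x_0^{-n-1}x_1x_0^n$ and the $x$-relations. I do not expect any genuine obstacle here: the real work was already carried out in Lemmas~\ref{lem:y}--\ref{lem:phi_xy}, and the corollary is essentially just a functoriality statement for $\varphi$ applied to one family of relators of $F$.
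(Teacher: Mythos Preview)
Your approach is exactly the paper's: the corollary is obtained by applying the automorphism $\varphi$ (Lemmas~\ref{lem:automorphism} and~\ref{lem:phi_xy}) to the relations $x_kx_nx_k^{-1}=x_{n+1}$ from presentation~\eqref{eqn:infinite_presentation}. Your explicit attention to the boundary case $k=0$---handled by setting $y_0:=\varphi(x_0)$ so that Lemma~\ref{lem:phi_xy} extends formally---is in fact more careful than the paper's one-line proof, which glosses over this point.
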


\begin{proof}
Follows from Lemma~\ref{lem:phi_xy} and the relations from the presentation~\eqref{eqn:infinite_presentation}.
\end{proof}

We are now ready to simplify the generating set for $\St_F(a)$ from Lemma~\ref{lem:infinite_gen_set}. For any word $W\in \{A,B\}^*$ let $W_A$ and $W_B$ denote the number of $A$'s and $B$'s in $W$, respectively. We refer the reader to Figure~\ref{fig:schreier0100} to illustrate the proposition below, where edges outside of the spanning tree $T$ are labeled by Schreier generators $x_n, n\geq 2$,  $y_m, m\geq 1$, and $z_w$.

\begin{proposition}
\label{prop:x}
For any integer $n\geq 1$ and any $W\in\{A,B\}^*$ that does not have $w^R(B,A)$ as a prefix we have
\[\begin{array}{l}
x_{W,n}=x_{n+1+W_B},\\
y_{W,n}=y_{n+W_A}.\\
\end{array}\]
\end{proposition}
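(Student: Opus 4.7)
The proof will proceed by induction on the length of $W$. The first key observation is that both Schreier generators can be rewritten as simple conjugates:
\[
x_{W,n} = W(x_0^{-1}x_1,x_1)\cdot x_{n+1} \cdot W(x_0^{-1}x_1,x_1)^{-1},\qquad y_{W,n} = W(x_0^{-1}x_1,x_1)\cdot y_n \cdot W(x_0^{-1}x_1,x_1)^{-1},
\]
where for $y_{W,n}$ we absorb the extra factors via $x_0^{-1}\cdot x_0^{-n}x_1 x_0^{n-1}\cdot x_0 = x_0^{-n-1}x_1 x_0^n = y_n$. The base case $W = \varepsilon$ is immediate. For the inductive step, write $W' = Wa$ with $a\in\{A,B\}$; since $W'$ has no prefix equal to $w^R(B,A)$, neither does any prefix of $W'$, so the induction hypothesis applies to $W$.

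The four cases are handled using the following identities, all direct consequences of the infinite presentation~\eqref{eqn:infinite_presentation}, Corollary~\ref{cor:xy_commute}, and the definitions of $x_n$, $y_n$:
\begin{itemize}
\item $x_1 x_m x_1^{-1} = x_{m+1}$ for $m\geq 2$ (from~\eqref{eqn:infinite_presentation} with $k=1$);
\item $x_0^{-1} x_{m+1} x_0 = x_m$ for $m\geq 1$ (rewriting $x_0 x_m x_0^{-1} = x_{m+1}$);
\item $x_1 y_m x_1^{-1} = y_m$ for $m\geq 1$ (Corollary~\ref{cor:xy_commute});
\item $x_0^{-1} y_m x_0 = y_{m+1}$ for $m\geq 1$ (directly from $y_m = x_0^{-m-1}x_1 x_0^m$).
\end{itemize}

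When $a = B$, conjugation by $x_1$ sends $x_{n+1}\mapsto x_{n+2}$ and fixes $y_n$, so $x_{W',n} = W\cdot x_{n+2}\cdot W^{-1}$ and $y_{W',n} = W\cdot y_n\cdot W^{-1}$. Applying the induction hypothesis with parameter $n+1$ in the first formula and $n$ in the second gives $x_{n+2+W_B}$ and $y_{n+W_A}$, which matches $W'_B = W_B+1$ and $W'_A = W_A$. When $a = A$, conjugation by $x_0^{-1}x_1$ sends $x_{n+1}\mapsto x_0^{-1}x_{n+2}x_0 = x_{n+1}$ and $y_n \mapsto x_0^{-1}y_n x_0 = y_{n+1}$, so $x_{W',n} = W\cdot x_{n+1}\cdot W^{-1}$ and $y_{W',n} = W\cdot y_{n+1}\cdot W^{-1}$. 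Applying the induction hypothesis yields $x_{n+1+W_B}$ and $y_{n+1+W_A}$, matching $W'_B = W_B$ and $W'_A = W_A+1$.

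I expect no real obstacle beyond careful bookkeeping: one must verify that in each of the four cases the relevant identity is applicable (which holds because $n\geq 1$ throughout, so all side conditions such as $m\geq 2$ are satisfied after the shift $m = n+1$) and that the induction hypothesis is invoked with a valid parameter. No further combinatorial structure of the Schreier graph is needed for this proposition — the condition that $W$ avoids $w^R(B,A)$ as a prefix is only used to ensure that the corresponding vertex $v_W$ and the edges it supports are actually present in $\Gamma_a$ so that $x_{W,n}$ and $y_{W,n}$ are genuine Schreier generators.
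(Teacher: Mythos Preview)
Your proof is correct and follows essentially the same approach as the paper: both rewrite the Schreier generators as conjugates of $x_{n+1}$ and $y_n$ by $W(x_0^{-1}x_1,x_1)$, and both reduce the computation to the four identities describing how conjugation by $x_1$ and by $x_0^{-1}x_1$ acts on $x_i$ and $y_j$. The only cosmetic difference is that you frame the final step as an explicit induction on $|W|$, whereas the paper simply states that conjugation by $W(x_0^{-1}x_1,x_1)$ shifts the index by $W_B$ (respectively $W_A$); these are the same argument.
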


\begin{proof}
It follows from the presentation~\eqref{eqn:infinite_presentation} that $x_0^nx_1x_0^{-n}=x_{n+1}$ for all $n\geq 1$. Therefore
\[x_{W,n}=W(x_0^{-1}x_1,x_1) \cdot x_0^nx_1x_0^{-n}\cdot W(x_0^{-1}x_1,x_1)^{-1}=W(x_0^{-1}x_1,x_1) \cdot x_{n+1}\cdot W(x_0^{-1}x_1,x_1)^{-1}.\]

Since for any $i\geq 2$
\[(x_0^{-1}x_1)x_i(x_0^{-1}x_1)^{-1}=x_0^{-1}\cdot (x_1x_ix_1^{-1})\cdot x_0^{-1}=x_0^{-1}\cdot (x_{i+1})\cdot x_0=x_i\]
and
\[x_1x_ix_1^{-1}=x_{i+1},\]
the conjugation of $x_{n+1}$ by $W(x_0^{-1}x_1,x_1)$ increases the index exactly by $W_B$ and $x_{W,n}=x_{n+1+W_B}$.

Similarly,
\[y_{W,n}=W(x_0^{-1}x_1,x_1)\cdot x_0^{-1}\cdot x_0^{-n}x_1x_0^n\cdot W(x_0^{-1}x_1,x_1)^{-1}=W(x_0^{-1}x_1,x_1)\cdot y_{n}\cdot W(x_0^{-1}x_1,x_1)^{-1}.\]
It follows from Corollary~\ref{cor:xy_commute} and the definition of $y_n$ that for $i\geq 1$
\[(x_0^{-1}x_1)y_i(x_1^{-1}x_0)=x_0^{-1}y_i x_0=y_{i+1}\]
and
\[x_1y_ix_1^{-1}=y_{i}.\]
Therefore, the conjugation of $y_{n}$ by $W(x_0^{-1}x_1,x_1)$ increases the index exactly by $W_A$ and $y_{W,n}=y_{n+W_A}$.
\end{proof}

Finally we prove the main result of the paper, Theorem~\ref{thm:main} from Introduction, that we repeat here for convenience of the reader.

\begin{theoremmain}~\\
\begin{enumerate}
  \item[(a)] For a rational point $a=10w^\infty$ of the Cantor set $X^\omega$ with a finite word $w$ that is not a proper power, we have
  \[\St_F(a)=\langle x_2, x_3, y_1, y_2, w(x_1^{-1},x_1^{-1}x_0)\rangle.\]
  \item[(b)] Let $b\in X^\omega$ be an arbitrary rational different from $0^\infty$ and $1^\infty$ point and let $b=vw^\infty$ be its unique decomposition as an eventually periodic word with a finite nonempty period $w\in X^*$ that is not a proper power and a finite preperiod $v\in X^*$ such that the ending of $v$ differs from the one of $w$. Then there is $h\in F$ that can be explicitly computed from the Schreier graph $\Gamma_b$, such that $h(b)=10vw^\infty$ and
  \[\St_F(b)=\langle x_2^h, x_3^h, y_1^h, y_2^h, w(x_1^{-1},x_1^{-1}x_0)^h\rangle.\]
\end{enumerate}
\end{theoremmain}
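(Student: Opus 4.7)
For part~(a), I would start from the infinite generating set $\{x_{W,n},y_{W,n},z_w\}$ produced by the Reidemeister--Schreier procedure in Lemma~\ref{lem:infinite_gen_set} and simplify it using Proposition~\ref{prop:x} and the relations among $x_k$'s and $y_k$'s. The specializations $x_{\varepsilon,n}=x_{n+1}$ and $y_{\varepsilon,n}=y_n$, valid because the nonempty word $w^R(B,A)$ cannot be a prefix of $\varepsilon$, already show that $\St_F(a)$ is generated by $\{x_i:i\geq 2\}\cup\{y_j:j\geq 1\}\cup\{z_w\}$. Applying the relations $x_kx_nx_k^{-1}=x_{n+1}$ from~\eqref{eqn:infinite_presentation} with $k=2$ reduces $\{x_i:i\geq 2\}$ inductively to $\{x_2,x_3\}$, and Corollary~\ref{cor:y_rel} gives the analogous reduction of $\{y_j:j\geq 1\}$ to $\{y_1,y_2\}$. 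It remains to rewrite the last generator: writing $w=a_1\cdots a_n$, a direct inversion of $z_w=w^R(x_1,x_0^{-1}x_1)=g_{a_n}\cdots g_{a_1}$ (with $g_0=x_1$, $g_1=x_0^{-1}x_1$) yields $g_{a_1}^{-1}\cdots g_{a_n}^{-1}=w(x_1^{-1},x_1^{-1}x_0)$, because $g_0^{-1}=x_1^{-1}$ and $g_1^{-1}=x_1^{-1}x_0$; thus $z_w^{-1}=w(x_1^{-1},x_1^{-1}x_0)$, and this element can replace $z_w$ in the generating set.

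For part~(b), the plan is to leverage part~(a) together with the conjugacy of stabilizers of points in the same $F$-orbit. Since $b$ and $10vw^\infty$ share the periodic tail $w^\infty$, they lie in the same orbit; in fact $10vw^\infty$ appears as a specific grey vertex of $\Gamma_b$ by Theorem~\ref{thm_schreier_rational}. I would choose a spanning tree of $\Gamma_b$ that contains the unique ascending path from the base point $b$ to $10vw^\infty$ (and beyond, into the grey tree structure used in part~(a)). The element $h\in F$ obtained by reading off the labels along the $b\to 10vw^\infty$ portion of this path is an explicit word in $x_0^{\pm 1},x_1^{\pm 1}$ satisfying $h(b)=10vw^\infty$, and the right-action convention $f^g=gfg^{-1}$ gives $\St_F(b)=\St_F(h(b))^h$. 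Because the underlying labeled Schreier graph coincides with the one used in part~(a) and only the base point has shifted, rerunning the Reidemeister--Schreier procedure with the new base point and compatibly shifted spanning tree reproduces the same five explicit generators of the grey-vertex stabilizer, and conjugation by $h$ yields the claimed five generators $x_2^h,x_3^h,y_1^h,y_2^h,w(x_1^{-1},x_1^{-1}x_0)^h$ of $\St_F(b)$.

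The main obstacle lies in the last step of part~(b): verifying that the simplifications of part~(a) genuinely transfer to the shifted base-point setting. One must arrange the spanning tree at $b$ so that the index-realization argument (that $W=\varepsilon$ already achieves every required $x_i$ and $y_j$) and the two inductive reductions (via $x_kx_nx_k^{-1}=x_{n+1}$ and the analogous $y_k$ relations) still go through and deliver \emph{exactly} the five explicit elements from part~(a), up to the uniform conjugation by $h$. The remaining steps — inverting $z_w$, extracting $h$ as a word from the spanning-tree path in $\Gamma_b$, and performing the routine conjugation — are essentially bookkeeping once the machinery of Sections~\ref{sec:groupF} and~\ref{sec:schreier} and the reductions of part~(a) are in place.
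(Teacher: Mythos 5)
Your part (a) is correct and is essentially the paper's own argument: start from Lemma~\ref{lem:infinite_gen_set}, collapse all $x_{W,n}$ and $y_{W,n}$ to the $x_i$ ($i\ge 2$) and $y_j$ ($j\ge 1$) via Proposition~\ref{prop:x}, reduce those two families to $\{x_2,x_3\}$ and $\{y_1,y_2\}$ using the relations of~\eqref{eqn:infinite_presentation} and Corollary~\ref{cor:y_rel}, and replace $z_w$ by $z_w^{-1}=w(x_1^{-1},x_1^{-1}x_0)$.

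Part (b) contains a genuine gap, and you have located it yourself: the step ``rerunning the Reidemeister--Schreier procedure with the new base point and compatibly shifted spanning tree reproduces the same five explicit generators'' does not hold as stated. The five elements of part (a) generate $\St_F(10w^\infty)$, the stabilizer of the \emph{root} of the grey tree, not $\St_F(10vw^\infty)$; these two subgroups are conjugate but in general not equal. If you re-base the spanning tree at $10vw^\infty$, every Schreier generator is read along a path that first travels from $10vw^\infty$ to $10w^\infty$, so each generator comes out conjugated by the corresponding element $p$, giving $x_2^p,x_3^p,\dots$ rather than $x_2,x_3,\dots$; after your conjugation by $h$ you would obtain $\St_F(b)=\St_F(10w^\infty)^{hp}$, i.e.\ conjugates by $hp$, not by $h$. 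The repair --- and the paper's actual route --- is to take $h$ to be the element read along a path in $\Gamma_b$ from $b$ all the way to the root $10w^\infty$, so that $h(b)=10w^\infty$ and $\St_F(b)=h\,\St_F(10w^\infty)\,h^{-1}$ follows from part (a) by a one-line conjugation, with no second run of Reidemeister--Schreier. (Note that the condition $h(b)=10vw^\infty$ printed in the statement appears to be a typo for $h(b)=10w^\infty$: with $h(b)=10vw^\infty$ the displayed generating set would force $\St_F(10vw^\infty)=\St_F(10w^\infty)$, which fails whenever $v\neq\varepsilon$. Your proposal inherited this inconsistency; once $h$ is aimed at $10w^\infty$, part (b) is immediate.)
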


\begin{proof}
For part (a) we start from a generating set
\[\left\{x_{W,n}, y_{W,n}, z_w \mid n \geq 1, W\in\{A,B\}^*\ \text{that does not have $w^R(B,A)$ as a prefix}\right\}\]
constructed in Lemma~\ref{lem:infinite_gen_set}. By Proposition~\ref{prop:x} all generators $x_{n,W}$ and $y_{n,W}$ are equal to some of the $x_i$, $i\geq 2$ and $y_j$, $j\geq 1$. On the other hand, $x_i=x_{\varepsilon,i-1}$ for $i\geq 2$ and $y_j=y_{\varepsilon,j}$ for $j\geq 1$. Therefore, we can conclude that
\[St_F(a)=  \langle x_i,y_j,z_w\colon i\geq 2, j\geq 1\rangle.\]
Now, since $x_n=x_2^{n-3}x_3x_2^{-(n-3)}$ for $n\geq 4$ by presentation~\eqref{eqn:infinite_presentation} and $y_n=y_1^{n-2}y_2y_1^{-(n-2)}$ for $n\geq 3$ by Corollary~\ref{cor:y_rel}, we reduce the generating set to
\[St_F(a)=  \langle x_2,x_3,y_1,y_2,z_w\rangle.\]
Finally, the replacement of $z_w$ by $z_w^{-1}$:
\[z_w^{-1}=w^R(x_1,x_0^{-1}x_1)^{-1}=w(x_1^{-1},(x_0^{-1}x_1)^{-1})=w(x_1^{-1},x_1^{-1}x_0)\]
finishes the proof.

Part (b) follows from the transitivity of the action of $F$ on the set of rational points with the same period, as follows directly from Theorem~\ref{thm_schreier_rational} and was also proved by Belk and Matucci (see~Proposition 3.2.3 in~\cite{matucci:phd08}). Indeed, given an element $h\in F$ such that $h(b)=10w^\infty$, we get $\St_F(b)=h\St_F(10w^\infty)h^{-1}$, which proves the claim. The element $h$ can be calculated as a word over $\{x_0,x_1,x_0^{-1},x_1^{-1}\}$ that is read along a path in $\Gamma_b$ connecting $b$ to $10w^\infty$. This word can be clearly effectively computed given the preperiod $v$, but we leave writing out the explicit algorithm that involves a few cases to the reader.
\end{proof}

In the last proposition we show that the Reidemeister-Schreier procedure yields finite presentability of stabilizers of all rational points in $X^\omega$, which, as stated in the introduction, also follows from Lemma~4.11 in~\cite{golan_s:stabilizers_of_finite_sets17}. We will not write out the exact presentation, but it certainly can be done if needed in each individual case.

\begin{proposition}
\label{prop:fp}
For each rational point $b$ of $X^\omega$ the stabilizer $\St_F(b)$ is finitely presented.
\end{proposition}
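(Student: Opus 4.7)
The plan is to apply the Reidemeister-Schreier procedure to the finite presentation $\langle x_0, x_1 \mid r_1, r_2\rangle$ of $F$ from~\eqref{eqn:finite_presentation}, using the Schreier graph $\Gamma_b$ together with the spanning tree chosen in Section~\ref{sec:generators}. The procedure produces an \emph{a priori} infinite presentation of $\St_F(b)$: the generators are the Schreier generators labeling edges outside the spanning tree, and the relators are the two Schreier words $r_1(v), r_2(v)$ obtained by reading $r_1$ and $r_2$ at each vertex $v$ of $\Gamma_b$. The goal is to argue that both sets can be simplified, via Tietze transformations, to finite ones.

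The generating set is reduced first. Proposition~\ref{prop:x} together with Corollary~\ref{cor:y_rel} and the identities $x_n = x_2^{n-3} x_3 x_2^{-(n-3)}$ for $n\geq 4$ and $y_n = y_1^{n-2} y_2 y_1^{-(n-2)}$ for $n\geq 3$ express every Schreier generator in terms of the fixed finite set $\{x_2, x_3, y_1, y_2, z_w\}$ (conjugated by $h$ in case (b)). Each of these identities is a consequence of $r_1, r_2$ in $F$, hence appears among the consequences of the Schreier relators for $\St_F(b)$. Using them as Tietze relations lets us eliminate every Schreier generator except those five.

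For the relators, the crucial observation is that $\Gamma_b$ is assembled from finitely many repeating building blocks---trees hanging from grey vertices canonically isomorphic to the one at $101$ in $\Gamma_{10^\infty}$, canonically isomorphic black and white rays, plus a finite exceptional region consisting of the nontrivial loop and the preperiodic path from $b$. Because each $r_i$ has fixed finite length, each Schreier relator $r_i(v)$ involves only the Schreier generators in a ball of bounded radius around $v$. After the Tietze renaming above (in which the image of $x_{W,n}$ depends only on $W_B$ and $n$, and similarly for $y_{W,n}$), Schreier relators read at vertices occupying corresponding positions inside two repeating blocks, and sufficiently far from the exceptional region, become literally the same word in the five remaining generators. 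Since only finitely many positions occur up to this local isomorphism, and the exceptional region contains only finitely many vertices, only finitely many distinct relators survive, producing a finite presentation.

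The main technical point is the last one: verifying that the canonical isomorphisms between repeating blocks of $\Gamma_b$ are compatible with the Tietze renaming, so that Schreier relators at isomorphic positions truly collapse to identical words. This reduces to tracking the way $W_A$ and $W_B$ transform under the block isomorphisms, and is straightforward given the explicit formulas of Proposition~\ref{prop:x}. Finally, for an arbitrary rational $b$ as in part~(b) of Theorem~\ref{thm:main}, finite presentability of $\St_F(b)$ follows from that of $\St_F(10vw^\infty)$ via conjugation by $h$, since conjugation preserves both the generating set and the relator set up to renaming.
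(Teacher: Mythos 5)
There is a genuine gap at the step you yourself single out as ``the main technical point.'' After the renaming of Proposition~\ref{prop:x}, the Schreier generator $x_{W,n}$ becomes $x_{n+1+W_B}$ and $y_{W,n}$ becomes $y_{n+W_A}$, and the quantities $W_B$ and $W_A$ are \emph{unbounded} as $W$ ranges over the grey vertices. Consequently, Schreier relators read at corresponding positions of two canonically isomorphic blocks rooted at $W$ and $W'$ do \emph{not} collapse to the same word: they are index-shifted copies of one another (e.g.\ relations among $x_m, x_{m+1},\dots$ with $m=n+1+W_B$ versus $m'=n+1+W'_B$), and rewriting $x_m$ as $x_2^{m-3}x_3x_2^{-(m-3)}$ only makes the words longer and still pairwise distinct. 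So the self-similarity of $\Gamma_b$ does not yield finitely many distinct relator words, and ``only finitely many distinct relators survive'' is false as stated.

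The missing idea, which is how the paper closes this gap, is group-theoretic rather than graph-theoretic: every Schreier relator read at a vertex of distance at least $7$ from the edge $e_w$ (the relators of~\eqref{eqn:finite_presentation} have length at most $14$) involves only generators of the form $x_i$, $i\geq 2$, and $y_j$, $j\geq 1$, and is therefore a relation holding in the subgroup $\langle x_i, y_j\rangle=\langle x_2,x_3\rangle\times\langle y_1,y_2\rangle\cong F\times F$. Since $F\times F$ is finitely presented on $\{x_2,x_3,y_1,y_2\}$ (two relators for each factor plus four commutators $[x_i,y_j]$, the latter valid by Corollary~\ref{cor:xy_commute}), the entire infinite family of far-away relators is a consequence of these eight, and only the finitely many relators read near $e_w$ need to be retained individually. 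Your reduction of the generating set and your treatment of part~(b) by conjugation are fine; it is only the finiteness argument for the relators that needs to be replaced by (or supplemented with) this appeal to the finite presentability of $F\times F$.
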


\begin{proof}
The claim is obvious for $b=1^\infty$ and $b=0^\infty$ as in these cases $\St_F(b)=F$, which is finitely presented. By conjugation, if necessary, we can assume that $b=10w^\infty$ and thus by Theorem~\ref{thm:main} $\St_F(b)=\langle x_2,x_3,y_1,y_2,w(x_1^{-1},x_1^{-1}x_0)\rangle$. We will show that it is enough to take only a finite number of relators from the set of Schreier relators $R_{\St_F(b)}$ constructed in Step~4 of the Reidemeister-Schreier procedure. Indeed, since the length of the longest relator in~\eqref{eqn:finite_presentation} is 14, any Schreier relator from $R_{\St_F(b)}$ that is obtained by reading a relator from $R=\{[x_1^{-1}x_0, x_0x_1x_0^{-1}],[x_1^{-1}x_0, x_0^2x_1x_0^{-2}]\}$ starting from a vertex in $\Gamma_b$ with distance at least 7 from the edge $e_w$ will involve only generators of $\St_F(b)$ of the form $x_i$ or $y_j$ for some $i\geq 2$, $j\geq 1$. It follows from presentation~\eqref{eqn:infinite_presentation} that
\[\langle x_2,x_3,x_4,\ldots\rangle=\langle x_2,x_3\rangle\cong F\]
and from Lemma~\ref{lem:phi_xy} that
\[\langle y_1,y_2,y_3,\ldots\rangle=\langle y_1,y_2\rangle\cong\langle x_1,x_2\rangle\cong F.\]
Thus, all of such relators will follow from 2 relators between $x_2$ and $x_3$, 2 relators between $y_1$ and $y_2$ (coming from a finite presentation~\eqref{eqn:finite_presentation} of $F$), and 4 commuting relators $[x_i,y_j]$, $i=2,3$, $j=1,2$. Note that the commuting relations hold true by Corollary~\ref{cor:xy_commute} and are consequences of a finite number of Schreier relations.

Since there is only finite number of vertices in $\Gamma_b$ which are distance at most 6 from the edge $e_w$, and each such vertex will introduce 2 Schreier relators from the Reidemeister-Schreier procedure, the claim follows.
\end{proof}

We finish the paper with a couple of remarks and a question.

\begin{remark}
In the case of $\St_F(10^\infty)=\St_F(\frac12)$ the nontrivial loop in the Schreier graph $\Gamma_{10^\infty}$ becomes a degenerate loop consisting just of one edge labeled by $x_1$ from the vertex $10^\infty$ to itself, i.e., we obtain the Schreier graph shown in Figure~\ref{fig:graph12}. In this case we have $w=0\in X$ and $z_{w}=z_0=x_1$, so Theorem~\ref{thm:main} yields $\St_F(10^\infty)=\langle x_2, x_3, y_1, y_2, x_1\rangle$.
Since $x_3=x_1x_2x_1^{-1}$ the last equality simplifies to
\[\St_F(10^\infty)=\langle x_1, x_2, y_1, y_2\rangle\cong\langle x_1,x_2\rangle \times \langle y_1,y_2\rangle\cong F\times F,\]
which is the well known generating set and structure description for $\St_F(10^\infty)$.
\end{remark}

\begin{remark}
\label{rem:01}
While in the interval $(0,1)$ we have $0.v10^\infty=0.v01^\infty$, in the Cantor set $X^\omega$ $v10^\infty$ and $v01^\infty$ are 2 distinct points and they have disjoint orbits under the action of $F$. Since the action of $F$ on $X^\omega$ is induced by the action on $[0,1]$, the corresponding Schreier graphs $\Gamma_{v10^\infty}$ and $\Gamma_{v10^\infty}$ must be isomorphic and the stabilizers in $F$ of these points must be equal. This can be easily verified by applying Theorems~\ref{thm_schreier_rational} and~\ref{thm:main} in this case.
\end{remark}

Since by Theorem~5.4 of~\cite{golan_s:stabilizers_of_finite_sets17} the stabilizer of any non-dyadic rational number can be generated by 3 elements, it is natural to ask:

\begin{question}
Are the generating sets of $\St_F(b)$ produced in Theorem~\ref{thm:main} minimal for non-dyadic rational points $b$?
\end{question}


\def\cprime{$'$} \def\cydot{\leavevmode\raise.4ex\hbox{.}} \def\cprime{$'$}
  \def\cprime{$'$} \def\cprime{$'$} \def\cprime{$'$} \def\cprime{$'$}
  \def\cprime{$'$} \def\cprime{$'$} \def\cprime{$'$} \def\cprime{$'$}
  \def\cprime{$'$} \def\cprime{$'$} \def\cprime{$'$} \def\cprime{$'$}
  \def\cprime{$'$}

\end{document}